\newcommand*\mR{\mathbb{R}}
\newcommand*\intom{\int_\Omega}
\newcommand*\fpqom{F_{p,q}(\Omega)}
\newcommand*\mW{\mathcal{W}}
\newcommand*\mN{\mathcal{N}}
\newcommand*\eps{\varepsilon}
\newcommand*\AJ{\textnormal{\textbf{A1}}}
\newcommand*\AD{\textnormal{\textbf{A2}}}
\newcommand*\AT{\textnormal{\textbf{A3}}}
\newcommand*\AAA{\textnormal{(\textbf{A})}}
\newcommand*\BB{\textnormal{(\textbf{B})}}
\newcommand*\CC{\textnormal{(\textbf{C})}}
\newcommand*\CJ{\textnormal{(\textbf{C1})}}
\newcommand*\CD{\textnormal{(\textbf{C2})}}
\newcommand*\red{}
\DeclareMathOperator{\intr}{int}
\DeclareMathOperator{\diam}{diam}
\DeclareMathOperator{\Sh}{\textnormal{\textbf{Sh}}}
\DeclareMathOperator{\SH}{\textnormal{\textbf{SH}}}
\theoremstyle{plain}
\newtheorem{proposition}{Proposition}[section]
\newtheorem{lemma}[proposition]{Lemma}
\newtheorem{theorem}[proposition]{Theorem}
\theoremstyle{definition}
\newtheorem{definition}[proposition]{Definition}
\newtheorem{example}[proposition]{Example}
\newtheorem{remark}[proposition]{Remark}
\numberwithin{equation}{section}
\title[Reduction of integration domain]{Reduction of integration domain in Triebel--Lizorkin spaces}
\author{Artur Rutkowski}
\address{Faculty of Pure and Applied Mathematics,
	Wroc\l aw University of Science and Technology,
	Wyb. Wyspia\'nskiego 27, 50-370 Wroc\l aw, Poland}
\email{artur.rutkowski@pwr.edu.pl}
\thanks{\hspace{-11pt}2010 \textit{Mathematics Subject Classification}. Primary 46E35 Secondary 31C25.\\ \textit{Key words and phrases}. Triebel--Lizorkin space, fractional Sobolev space, Gagliardo seminorm.}
\begin{document}
	\begin{abstract}
	We investigate the comparability of generalized Triebel--Lizorkin and Sobolev seminorms on uniform and non-uniform sets when the integration domain is truncated according to the distance from the boundary. We provide numerous examples of kernels and domains in which the comparability does and does not hold.
\end{abstract}
\maketitle
\section{Introduction}

Let $\Omega\subset \mR^d$ be a domain, $d\geq 1$, and let $p,q\in (1,\infty)$. Let $K\colon \mR^d \times \mR^d \to (0,\infty]$ be a homogeneous, radial kernel, i.e. $K(x,y) = k(|x-y|)$, satisfying $\int_{\mR^d}  (1\wedge|y|^q)K(0,y) \, dy <\infty$. We define the (generalized) Triebel--Lizorkin space on $\Omega$ as
\begin{equation}\label{eq:tldef}
\fpqom := \Big\{f\in L^p(\Omega): \intom\bigg(\intom |f(x) - f(y)|^q K(x,y) \,\red{dy}\bigg)^{\frac pq} \, \red{dx} < \infty \Big\}.
\end{equation}
The space $\fpqom$ obviously depends on $K$, however we skip it in the notation for clarity.

$\fpqom$ is endowed with the norm
$$\| f\|_{\fpqom} = \|f\|_{L^p(\Omega)} + \bigg(\intom\Big(\intom |f(x) - f(y)|^q K(x,y) \, \red{dy}\Big)^{\frac pq} \, \red{dx} \bigg)^{\frac 1p}.$$

We are interested in the Gagliardo-type seminorm
\begin{equation}\label{eq:fullsemi}
\bigg(\intom\Big(\intom |f(x) - f(y)|^q K(x,y) \, \red{dy}\Big)^{\frac pq} \, \red{dx} \bigg)^{\frac 1p},
\end{equation}
which will be called the \textit{full seminorm}. Let \red{$\theta \in (0,1]$} and let $\delta(x) = d(x,\partial \Omega)$. Our main goal is to establish the comparability of the full seminorm and the \textit{truncated seminorm}
\begin{equation}\label{eq:truncated}
\bigg(\intom\Big(\int_{B(\red{x},\theta\delta(\red x))} |f(x) - f(y)|^q K(x,y) \, \red {dy}\Big)^{\frac pq} \, \red {dx} \bigg)^{\frac 1p}
\end{equation}
for sufficiently regular $K$ and $\Omega$. Later on, such occurrence will be called a \textit{comparability result}.

Here is our first comparability result.
\begin{theorem}\label{th:main}
	Assume that $\Omega$ is a uniform domain, and that $K$ satisfies \AJ, \AD\ and \AT\ \red{formulated in Subsection \ref{sec:a} below}. Assume that $1<q\leq p<\infty$. Then for every $0<\theta\leq 1$
	\begin{align*}\bigg(\intom\Big(\intom |f(x) - f(y)|^q K(x,y) \, \red {dy}\Big)^{\frac pq} \, \red {dx} \bigg)^{\frac 1p} \approx \bigg(\intom\Big(\int_{B(\red x,\theta\delta(\red x))} |f(x) - f(y)|^q K(x,y) \, \red{dy}\Big)^{\frac pq} \, \red{dx} \bigg)^{\frac 1p}.\end{align*}
	The comparability constant depends on $p,q,\theta,\Omega$, and the constants in assumptions \AD, \AT.
\end{theorem}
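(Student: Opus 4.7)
The bound $\gtrsim$ is immediate: the truncated seminorm is obtained from the full one by shrinking the inner domain of integration and the integrand is nonnegative. For the non-trivial direction $\lesssim$, the plan is, for fixed $x\in\Omega$, to split
\[
\intom |f(x)-f(y)|^q K(x,y)\,dy \,=\, \int_{B(x,\theta\delta(x))} |f(x)-f(y)|^q K(x,y)\,dy \,+\, \int_{\Omega\setminus B(x,\theta\delta(x))} |f(x)-f(y)|^q K(x,y)\,dy,
\]
so that the first term is precisely what appears in the truncated seminorm and only the long-range piece needs to be absorbed. To control it I would fix a Whitney decomposition $\{Q_j\}$ of $\Omega$ and invoke the defining property of a uniform domain: any two points $x,y\in\Omega$ are joined by a rectifiable curve of length $\lesssim |x-y|$ whose points $z$ satisfy $\delta(z)\gtrsim\min(|x-z|,|z-y|)$. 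Tracking this curve through the Whitney cubes it meets produces a chain $Q_0=Q(x),Q_1,\dots,Q_N=Q(y)$ with $N\lesssim 1+\log(|x-y|/\min(\delta(x),\delta(y)))$, with consecutive members overlapping, and such that for each $k$ the union $Q_k\cup Q_{k+1}$ lies inside a ball $B(z_k,c\theta\delta(z_k))$ for some $z_k\in\Omega$ and a fixed constant $c$; this is essentially the content of the shadow operators $\Sh$ and $\SH$ set up earlier in the paper.

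Telescoping through the averages $f_{Q_k}$,
\[
|f(x)-f(y)| \leq |f(x)-f_{Q_0}| + \sum_{k=0}^{N-1} |f_{Q_k}-f_{Q_{k+1}}| + |f_{Q_N}-f(y)|,
\]
Jensen's inequality bounds each average difference by an $L^q$-oscillation of $f$ on a ball centred at $z_k$ of radius $\lesssim \theta\delta(z_k)$, i.e.\ by a quantity appearing in the truncated integrand. Raising to the $q$-th power (paying at most a factor $N^{q-1}$), integrating against $K(x,y)\,dy$ over the long-range region, and using \AD\ together with the behaviour encoded in \AJ\ and \AT\ to replace $K(x,y)$ by its essentially constant value on each $Q_k$, the long-range contribution is recast as a discrete sum, indexed by Whitney cubes in the shadow of $x$, of local oscillations weighted by explicit geometric factors.

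The hypothesis $p\geq q$ then lets Minkowski's integral inequality pull the $L^{p/q}$-norm in $x$ inside the chain sum, so that each summand can be matched, via a change of variables and the bounded-overlap property of the Whitney decomposition, to the truncated seminorm. I expect the main obstacle to be precisely the combinatorial and measure-theoretic bookkeeping in this last step: after the telescoping each local oscillation around a Whitney cube $Q$ is visited by many pairs $(x,y)$, and one must show that the total $x$-weight it accumulates is summable against $K$ without losing a logarithmic factor in the worst case. This is where the uniform-domain hypothesis enters quantitatively, through the logarithmic bound on $N$, and where the decay/doubling control built into \AT\ is used; relaxing either ingredient is exactly what the later ``negative'' examples of the paper are presumably designed to exploit.
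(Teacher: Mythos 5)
Your proposal has the right geometric scaffolding -- Whitney chains, telescoping through cube averages, treating the long-range region separately -- and it correctly anticipates that the difficulty lies in reassembling the chain contributions without a logarithmic loss. However, you leave exactly that difficulty unresolved, and it is not a bookkeeping issue: your plan as stated does lose a log, and the paper needs two devices you don't mention to avoid it.

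First, the paper never estimates $\intom |f(x)-f(y)|^q K(x,y)\,dy$ pointwise in $x$ and then takes the $L^{p/q}$-norm. It passes to the dual expression
\[
\sup_{\substack{g\geq 0,\ \|g\|_{L^{p'}(L^{q'})}\leq 1}} \intom\intom |f(x)-f(y)|\,|x-y|^{-d/q}\phi(|x-y|)^{-1}\,g(x,y)\,dy\,dx,
\]
which linearizes the problem and lets the H\"older inequality and the (non-centered) Hardy--Littlewood maximal operator act on $g$. The fact that $M$ is bounded on $L^{p'}$ (so $p'>1$, i.e.\ $p<\infty$) is what eventually produces the correct $L^p(\ell^q)$ seminorm. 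In your sketch the role of ``Minkowski to pull the $L^{p/q}$-norm inside the chain sum'' is vague, and the nesting goes the wrong way for Minkowski to work directly; without the duality step there is no clean way to control the accumulation over $x$.

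Second, and more seriously, the step where you telescope and ``raise to the $q$-th power (paying at most a factor $N^{q-1}$)'' is exactly the obstacle you later flag, and it really would cost a logarithm, since $N$ is of order $\log(D(Q,S)/l(Q))$ and the full seminorm sums over $S$ at all scales. The paper avoids this by a \emph{weighted} H\"older inequality along the chain: writing $f_P - f_{\mN(P)} = (f_P-f_{\mN(P)})\,\phi(l(P))^{1/q}\phi(l(P))^{-1/q}$ and applying H\"older with exponent $q$, the extraneous factor $\big(\sum_{P\in[S,R)}\phi(l(P))^{q'/q}\big)^{q/q'}$ is then controlled by $\phi(l(R))$ thanks to Lemma \ref{lem:phil}, not by $N$. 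This is precisely where assumption \AD\ enters (the geometric summability of $\phi(2^{-k}r)^{1/(q-1)}/\phi(r)^{1/(q-1)}$), and it is what distinguishes the uniform case from the $0$-order case of Section 5, where the loss is genuine and appears as the extra $(|\log|x-y||\vee 1)^q$. So the missing idea in your proposal is concrete: replace the crude $N^{q-1}$ bound with a $\phi$-weighted H\"older, and back it with \AD\ to show the weight sums are uniformly bounded across chains.

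Finally, a smaller point: after telescoping you still must handle the fact that a given cube $P$ is visited by many chains $[S,R]$, and it is Lemma \ref{lem:phil}, applied a second time to $\sum_{R:P\in\Sh(R)}\phi(l(R))^{-\eta}$, together with the maximal operator, that collapses this overcounting -- not the bounded overlap of the Whitney decomposition alone.
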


This is a generalization of the result of Prats and Saksman \cite[Theorem 1.6]{PS} who prove it for the kernels of the form: $K(x,y) = |x-y|^{-d-qs}$ for $s\in (0,1)$. Recently, we were informed that this classical case can also be resolved using the much earlier result of Seeger \cite[Corollary 2]{MR1097208}. 

Another result of this flavor was established by Dyda \cite[(13)]{DYDA2006564} and was used to obtain Hardy inequalities for nonlocal operators. \red{More recent results on reduction of integration domain in fractional Sobolev spaces include Bux, Kassmann, and Schulze \cite{2017arXiv170709277B} who consider certain cones with apex at $x$ instead of $B(x,\delta(x))$, and Chaker and Silvestre \cite{2019arXiv190413014C}.} 

Here we mention that, independently of our work, Kassmann and Wagner \cite{2018arXiv181012289K} have also proved comparability results which extend the ones from \cite{PS}, allowing for kernels with scaling conditions \red{for $p=q=2$}. However, their overall aim and scope are different than ours.

In Theorem \ref{th:main} we adapt the method of proof from \cite{PS} for a wide class of kernels of the form $K(x,y) = |x-y|^{-d}\phi(|x-y|)^{-q}$. The most technical assumption \textbf{A2} is tailored for the key Lemmas \ref{lemMaximal}, \red{\ref{lem:phil}}, however in Subsection \ref{sec:zal} we argue that it amounts to \red{at least a power-type decay at 0, and for unbounded $\Omega$ at least a power-type growth at $\infty$, of $\phi$.} For the formulation of the assumptions, see Subsection \ref{sec:a}.

Notably, we go beyond the uniform domains, where the methods used by Prats and Saksman are no longer available. Namely, we prove that the comparability may hold for the fractional Sobolev spaces in strip domains. 

\begin{theorem}\label{th:stripmd}
	Assume that $p=q=2$. Let $\Omega = \mR^k\times (0,1)^l \subseteq \mR^{k+l}$ with $k,l>0$. For $d=k+l$ let $K(x,y) = |x-y|^{-d-\alpha}$ with $\alpha\in (0,2)$. If \red{$k-l - \alpha < -1$}, then the seminorms \eqref{eq:fullsemi} and \eqref{eq:truncated} are comparable.
\end{theorem}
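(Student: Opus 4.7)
Since $p = q = 2$, Plancherel's theorem in the unbounded $\mathbb R^k$ direction is available, and I use it to reduce the comparability to an inequality for each fixed frequency $\xi$. The direction $\|f\|_{\text{trunc}} \leq \|f\|_{\text{full}}$ is trivial, so I focus on the reverse.

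Introduce the partial Fourier transform $\tilde f(\xi, x'') := \mathcal F_{x'\to\xi} f(\cdot, x'')$ for $(\xi, x'') \in \mathbb R^k \times (0,1)^l$. Noting that $\delta(x)$ depends only on $x''$, translation invariance in $x'$ together with Plancherel convert the full seminorm to
$$\|f\|^2_{\text{full}} = \int_{\mathbb R^k}\iint_{(0,1)^{2l}}\int_{\mathbb R^k} \left|\tilde f(\xi, x'') - e^{-iz\cdot\xi}\tilde f(\xi, y'')\right|^2 \bigl(|z|^2+|x''-y''|^2\bigr)^{-(d+\alpha)/2}\, dz\, dx''\, dy''\, d\xi.$$
Replacing the ball $B(x,\theta\delta(x))$ by the smaller product set $B_{\mathbb R^k}(x',\theta\delta(x)/\sqrt 2) \times B_{(0,1)^l}(x'',\theta\delta(x)/\sqrt 2)$ (which is contained in the ball) gives an analogous formula for a lower bound of the truncated seminorm, with the inner $z$-integral restricted to $|z| < \theta\delta(x)/\sqrt 2$. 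It therefore suffices to dominate $\|f\|^2_{\text{full}}$ by this product-truncated seminorm.

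The difference $\mathcal E$ between the two is then given by the same integrand restricted to $|z| > \theta\delta(x)/\sqrt 2$. For fixed $\xi$ and $(x'', y'')$ with $t := |x''-y''|$, the inner $z$-integral is analyzed via the explicit partial Fourier transform of $(|z|^2+t^2)^{-(d+\alpha)/2}$, which by scaling equals $t^{-l-\alpha}\hat K(t\xi)$ for a radial Bessel-type function $\hat K$ bounded near $0$ and decaying exponentially at $\infty$. Expanding the squared difference into diagonal and cross terms and applying these asymptotics, the diagonal part produces a weight $t^{-l-\alpha}$ which, after $y''$-integration over $\{|y''-x''|>\theta\delta(x'')/\sqrt 2\}$, yields a Hardy-type weight of order $\delta(x'')^{-\alpha}$; the oscillatory cross term decays, after stationary-phase / Bessel analysis, like $(1+|\xi|)^{-(l+\alpha-1)}$ at high frequencies.

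Integrating in $\xi \in \mathbb R^k$, the diagonal contribution to $\mathcal E$ is absorbed by the product-truncated seminorm via a fractional Hardy inequality on the bounded Lipschitz domain $(0,1)^l$, while the cross-term contribution is controlled precisely when $\int_{\mathbb R^k}(1+|\xi|)^{-(l+\alpha-1)}\,d\xi < \infty$, i.e.\ when $l + \alpha - 1 > k$, equivalently the theorem's hypothesis $k - l - \alpha < -1$. The main obstacle is the careful bookkeeping of the Bessel-type asymptotics and their coupling with the fractional Hardy inequality on $(0,1)^l$; the corners of this Lipschitz domain (for $l \geq 2$) require care but fall under classical Hardy-type estimates. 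A secondary difficulty is the passage from the ball truncation to the product truncation, which must lose only a bounded factor so that the final comparability for \eqref{eq:truncated} is preserved.
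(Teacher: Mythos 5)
Your proposal takes a genuinely different route from the paper -- the paper tiles $\Omega$ by unit cubes $(Q_i)_{i\in\mathbb Z^k}$, applies Theorem~\ref{th:main} locally to neighboring-cube patches $L_i$, and then telescopes $f(x)-f(y)$ through averages $f_{Q_n}$ along a chain of cubes joining $Q_i$ to $Q_j$; the condition $k-l-\alpha<-1$ emerges there from the convergence of $\sum_{j\in\mathbb Z^k\setminus\{0\}}|j|^{-l-\alpha+1}$. However, as written your argument has several gaps that I believe are fatal rather than routine.

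First, the passage to the product truncation is misdescribed: if you replace $B(x,\theta\delta(x))$ by $B_{\mathbb R^k}(x',\theta\delta/\sqrt2)\times B_{\mathbb R^l}(x'',\theta\delta/\sqrt2)$, the complement consists of \emph{two} regions, $\{|z|\ge\theta\delta/\sqrt2\}$ \emph{and} $\{|x''-y''|\ge\theta\delta/\sqrt2\}$, and the error $\mathcal E$ contains contributions with arbitrarily small $z$. So $\mathcal E$ is not ``the same integrand restricted to $|z|>\theta\delta/\sqrt2$.'' Second, and more seriously, expanding $|\tilde f(\xi,x'')-e^{-iz\cdot\xi}\tilde f(\xi,y'')|^2$ into diagonal ($|\tilde f(\xi,x'')|^2+|\tilde f(\xi,y'')|^2$) and cross terms destroys the essential feature that the seminorm vanishes when $\tilde f(\xi,\cdot)$ is constant in $x''$. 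For such data the diagonal contribution to $\mathcal E$ is strictly positive while the left-hand side is zero: it is only the cancellation with the cross term that saves you, so there is no hope of bounding the diagonal part alone by the (constant-annihilating) truncated seminorm. In particular, the fractional Hardy inequality you invoke on $(0,1)^l$ cannot hold in the form with the truncated seminorm on the right-hand side (test it on $u\equiv1$), and a version with an $L^2$-remainder is not allowed since the theorem compares seminorms, not norms. Third, the asserted decay $(1+|\xi|)^{-(l+\alpha-1)}$ for the cross term is not supported by anything concrete; the sharp cutoff at $|z|=\theta\delta/\sqrt2$ generally produces an oscillatory integral decaying only like $|\xi|^{-(k+1)/2}$, and the coupling between the Bessel asymptotics, the scale parameter $t=|x''-y''|$, and the $y''$-integration is exactly the ``careful bookkeeping'' you acknowledge but do not carry out. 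Finally, note that $\alpha=1$ is compatible with $k-l-\alpha<-1$ (e.g.\ $k=1$, $l=3$), and the fractional Hardy inequality on bounded Lipschitz domains is known to be delicate precisely in that threshold case. A Fourier-side comparison of the two seminorms could plausibly be made to work, but it would have to keep the full quadratic form $|\tilde f(\xi,x'')-e^{-iz\cdot\xi}\tilde f(\xi,y'')|^2$ intact rather than separate its pieces.
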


We also construct a counterexample for $\alpha < 1$ and $k=l=1$. This shows an intriguing interplay between the kernel and the width of the domain. \red{Heuristically speaking, it can be seen both in Theorem \ref{th:stripmd} and in Subsection \ref{sec:zal} that the comparability holds if the stochastic process corresponding to the jump kernel $K\cdot \textbf{1}_{\Omega\times \Omega}$ and the shape of the domain $\Omega$ favor small jumps over large jumps. We remark that the connection between the jump kernel and the stochastic process is a very delicate matter. In Section \ref{sec:proces} we present a short discussion of this subject and we place our comparability results in this context}.

Another object of our studies is the 0-order kernel $K(x,y) \approx |x-y|^{-d}$. We provide examples showing that the comparability does not hold in this case. In an attempt to repeat the proof of Theorem \ref{th:main} we obtain an estimate of \eqref{eq:fullsemi} by a truncated seminorm with a slightly more singular kernel, \red{see Theorem \ref{th:0order} below.}


The classical Triebel--Lizorkin spaces were introduced independently by Lizorkin \cite{Liz74} and Triebel \cite{triebel1973}. The original definition is formulated using Paley--Littlewood theory and is widely used in analysis and applications, see e.g. \cite{doi:10.1002/mana.201600315,bu2009,grafakos2014modern}. For various cases of $p,q,d$ and $\Omega$ the classical definition was proved to be equivalent to $\eqref{eq:tldef}$ with $K(x,y) = |x-y|^{-d-sq}$, where $s\in(0,1)$, see \cite{PS,stein1961,triebel2010theory}.

The definition \eqref{eq:tldef} seems more natural if the starting point is $p=q=2$, e.g., fractional Sobolev spaces in nonlocal PDEs \cite{DINEZZA2012521, MR3318251, 2018AR}, or Dirichlet forms for Hunt processes \cite{MR2006232, MR2778606}. It is also a suitable definition for considering kernels $K$ more general than $|x-y|^{-d-sq}$ which is also of interest in the field of nonlocal operators and stochastic processes. In this paper we will not attempt to characterize the definition \eqref{eq:tldef} in the spirit of classical definitions by Triebel and Lizorkin in the full generality. However, we \red{use} the Fourier methods in Section 5, where we compare spaces with kernels which are only slightly different from each other.

As we argue further in the article, the comparability results can be used to study a class of stochastic processes, whose jumps from \red{$x$} are restricted to the ball $B(\red x,\theta \delta(\red x))$. The truncated seminorms may also prove useful in peridynamics, as $B(\red x,\theta\delta(\red x))$ may be understood as the \textit{variable horizon}, see e.g. \cite{varhor2,varhor}, and in particular Du and Tian \cite{doi:10.1137/16M1078811} where horizons depending on the distance from the boundary are studied.

The paper is organized as follows. Section 2 contains notions, assumptions, and basic facts used further in our work. Section 3 is devoted to proving Theorem \ref{th:main}. In Section 4 we present positive and negative examples of kernels concerning the comparability results. Section 5 contains the analysis of 0-order kernels. In Section 6 we consider strip domains, in particular we prove Theorem \ref{th:stripmd}. Section 7 presents the connection of our development with the theory of Hunt processes.
\section*{Acknowledgements}
I am grateful to Bart\l omiej Dyda for introduction to the subject, many hours of helpful discussions and for reading the manuscript. I thank Tomasz Grzywny and Dariusz Kosz for stimulating discussions. I also thank Mart\'i Prats for pointing out a flaw in the proof of Theorem \ref{th:main} in the previous version of the manuscript.
\red{I express my gratitude to the anonymous referees for numerous essential remarks and for raising important questions concerning the proofs and the main assumptions of the paper. Research was supported by the grant 2015/18/E/ST1/00239 of the National Science Center (Poland).}
\section{Preliminaries and assumptions}
\subsection{Assumptions on the kernel}\label{sec:a}
\red{In the sequel we will consider the exponents $1<q\leq p<\infty$ and the assumptions are subordinated to them. As usual, $p' = \frac p{p-1}$ is the H\"older conjugate of $p$. We also let
	$$N(r) = \inf\{k\in \mathbb{N}: 2^kr > \diam (\Omega)\},\quad r>0.$$
	We have $N(r) = \infty$ for every $r>0$ if and only if $\Omega$ is unbounded.}\\
We assume that the kernel $K$ is of the form $K(x,y) = |x-y|^{-d}\phi(|x-y|)^{-q}$, where $\phi\colon (0,\infty) \to (0,\infty)$ satisfies
\begin{enumerate}
	\item[\AJ] $(1\wedge|y|^q)|y|^{-d}\phi(|y|)^{-q}\in L^1(\mR^d)$,
	\item[\AD] $\phi$ is increasing \red{and there exists $C_2>0$ such that for $t_1 = \min(q,p - \frac pq)$, $t_2= \frac 1{q-1}$, and for every $0<r<\diam(\Omega)$, we have $$\sum\limits_{k=1}^{N(r)} \frac{\phi(r)^{t_1}}{\phi(2^kr)^{t_1}} \leq C_2$$ and $$\sum\limits_{k=1}^\infty \frac{\phi(2^{-k}r)^{t_2}}{\phi(r)^{t_2}} \leq C_2.$$}
	\item[\AT] \red{There exists $C_3\geq 1$ such that for every $0<r<3\diam(\Omega)$, we have $\phi(2r) \leq C_3 \phi (r)$.}
\end{enumerate}
In particular, we allow unbounded domains in which the scaling conditions \AD,\ \AT\ become global. Note that \AJ\ is a L\'evy \red{measure}-type condition\red{, which assures the finiteness of \eqref{eq:fullsemi} for smooth, compactly supported~$u$}. If $q=2$ and $\phi(r) = r^{s}$, $s\in (0,1)$, then $K$ corresponds to the fractional Laplacian of order $s$ and all the assumptions are satisfied. The conditions \AD \ and \AT\ imply certain \red{scaling} for $K$, see Subsection \ref{sec:zal} for the details. \red{The exponents $t_1$ and $t_2$ in \AD\ stem from the five instances of usage of Lemma \ref{lemMaximal} and \ref{lem:phil} in the proof of Theorem \ref{th:main}. Since $\phi$ is increasing, the bounds in \AD\ hold for all larger exponents in place of $t_1$ and $t_2$. We note the following, frequently used below, consequence of \AT\ and the monotonicity of $\phi$: if $x \lesssim y$, then $\phi(y)^{-1} \lesssim \phi(x)^{-1}$.}
\subsection{Whitney decomposition and uniform domains}
For cubes $Q, R$ in $\mR^d$ we consider $l(R)$ --- the length of the side of $R$, and the long distance between $Q$ and $R$: $D(Q,R) = l(Q) + d(Q,R) + l(R)$, where $d$ is the Euclidean distance. The scaling of the cube is done from its center $x_Q$.

We say that a family of (closed) dyadic cubes $\mathcal{W}$ is a Whitney decomposition of $\Omega$ if for every $Q, S \in \mW$
\begin{itemize}
	\item if $Q\neq S$, then  $\intr(Q)\cap\intr(S) = \emptyset;$
	\item if $Q\cap S \neq \emptyset$, then $l(Q)\leq 2 l(S)$;
	\item if $Q\subseteq 5S$, then $l(S) \leq 2l(Q)$;
	\item there is a constant $C_{\mW}$ such that $C_{\mW}l(Q) \leq d(Q,\partial\Omega)\leq 4C_{\mW}l(Q)$.
\end{itemize}
A sequence of cubes $(Q,R_1,\ldots,R_n,S)$ is a chain connecting $Q$ and $S$, if every cube \red{is a neighbor of} its successor and predecessor (if it has one), \red{by which we mean that their boundaries have non-empty intersection}. We will denote the chain as $[Q,S]$ and the sum of the lengths of its cubes as $l([Q,S])$. We let $[Q,S)= [Q,S]\setminus \{S\}$.

The Whitney decomposition is admissible, if there exists $\eps > 0$ such that for every pair of cubes $Q,S$, there exists an $\eps$-admissible chain $[Q,S] = (Q_1,Q_2,\ldots Q_n)$, i.e.
\begin{itemize}
	\item $l([Q,S]) \leq \frac 1\eps D(Q,S)$,
	\item there exists $j_0 \in \{1,\ldots, n\}$ for which $l(Q_{j}) \geq \eps D(Q,Q_j)$ for every $1\leq j \leq j_0$, and $l(Q_{j}) \geq \eps D(Q_j,S)$ for every $j_0\leq j\leq n$. $Q_{j_0}$ will be denoted as $Q_S$ --- the central cube of the chain $[Q,S]$.
\end{itemize}
A domain which has an admissible Whitney decomposition is called a uniform domain. Unless we state otherwise, $[Q,S]$ is an arbitrary ($\eps$-)admissible chain connecting $Q$ and $S$. 

The shadow of a cube is $\Sh_\rho(Q) = \{S\in \mW: S\subseteq B(x_Q,\rho l(Q))\}$, $\rho > 0$. We also denote $\SH_\rho(Q) = \bigcup \Sh_\rho(Q)$. Note that we can take a sufficiently large $\rho_\eps$ so that
\begin{itemize}
	\item for every $\eps$-admissible chain $[Q,S]$, and every $P\in [Q,Q_S]$, we have $Q \in \Sh_{\rho_\eps}(P)$,
	\item if $[Q,S]$ is $\eps$-admissible, then every cube from it belongs to $\Sh_{\rho_\eps}(Q_S)$,
	\item for every $Q\in \mW$, $5Q \subseteq \SH_{\rho_\eps}(Q)$.
\end{itemize}
From now on we fix $\rho_\eps$ and write $\Sh(Q) = \Sh_{\rho_\eps}(Q)$ and $\SH(Q) = \SH_{\rho_\eps}(Q)$.
\begin{remark}
	\red{The proofs appearing throughout the paper involve a lot of '$\lesssim$', and '$\gtrsim$' signs. We would like to stress that any comparability for $\phi$ stems from \AD\ and \AT. In particular, for fixed $p,q$ the constants can be chosen to depend only on the geometry of $\Omega$ (including the dimension) and the constants in \AD\ and \AT\ wherever $\phi$ is used.}
\end{remark}

The next lemma provides some inequalities for the \red{non-centered} Hardy--Littlewood maximal operator (denoted by $M$) with connection to the kernel $K$. \red{It is inspired by the results of \cite[Section 2]{PS} and Prats and Tolsa \cite[Section 3]{PRATS20152946}.}
\begin{lemma}\label{lemMaximal}
	Let $\Omega$ be a domain with Whitney covering $\mW$, and let $\phi$ satisfy \AJ ,\ \AD\ and \AT. Assume that $g\in L^1_{loc}(\mR^d)$ \red{is nonnegative} and $0<r<3\diam(\Omega)$. For every \red{$\eta \geq \min(q,p - \frac pq)$}, $Q\in\mW$ and $x\in \red{\Omega}$, we have
	\begin{align}\label{eqMaximalFar}
	\int_{\red{\Omega \cap \{|x-y|>r\}}} \frac{g(y) \, dy}{|x-y|^d\phi(|x-y|)^\eta}&\lesssim \frac{Mg(x)}{\phi(r) ^\eta},\\
	\sum_{S:D(Q,S)>r}  \frac{\int_S g(y) \, dy}{D(Q,S)^d\phi(D(Q,S))^\eta}&\lesssim \frac{\inf_{x\in Q} Mg(x)}{\phi(r) ^\eta},\label{eqMaximalfar2}
	\end{align}
	and
	\begin{equation}\label{eqMaximalAllOver}
	\sum_{S\in\mathcal{W}} \frac{l(S)^d}{D(Q,S)^d\phi(D(Q,S))^{\eta}} \lesssim \frac{1}{\phi(l(Q))^\eta}.
	\end{equation}
\end{lemma}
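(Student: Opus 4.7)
The plan is to handle all three inequalities by the same dyadic shell decomposition at scale $r$: on each shell, bound the $g$-mass via the maximal function, then use the monotonicity of $\phi$ to turn the shell sum into the one controlled by \AD.

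For \eqref{eqMaximalFar}, I would split $\Omega \cap \{|x-y|>r\}$ into dyadic annuli $A_k = \{2^k r < |x-y| \leq 2^{k+1}r\}$, $k \geq 0$. Since $\phi$ is increasing and $|x-y|^d \gtrsim (2^k r)^d$ on $A_k$, while $A_k \subset B(x, 2^{k+1}r)$, we get
$$\int_{A_k} \frac{g(y)\,dy}{|x-y|^d \phi(|x-y|)^\eta} \lesssim \frac{1}{(2^kr)^d \phi(2^kr)^\eta}\int_{B(x,2^{k+1}r)} g \lesssim \frac{Mg(x)}{\phi(2^kr)^\eta}.$$
If $\Omega$ is bounded, $A_k = \emptyset$ for $k > N(r)$; if unbounded, $N(r) = \infty$. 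To match the exponent in \AD, write $\phi(2^kr)^{-\eta} \leq \phi(r)^{-(\eta - t_1)} \phi(2^kr)^{-t_1}$ using $\eta \geq t_1$ and $\phi(2^kr) \geq \phi(r)$; summing and applying the first bound in \AD\ yields $Mg(x)\,\phi(r)^{-\eta}$.

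For \eqref{eqMaximalfar2}, I would group the Whitney cubes into the shells $\mathcal{S}_k = \{S\in\mW : 2^kr < D(Q,S) \leq 2^{k+1}r\}$. Since $l(S), d(Q,S), l(Q) \leq D(Q,S)$, for any $x\in Q$ and any $S\in\mathcal{S}_k$ we have $d(x,S) \leq d(Q,S) + \diam(Q) \lesssim 2^kr$ and $l(S) \lesssim 2^kr$, so $S \subseteq B(x, C 2^kr)$. Using disjointness of the Whitney cubes,
$$\sum_{S\in\mathcal{S}_k} \int_S g \leq \int_{B(x, C 2^kr)} g \lesssim (2^kr)^d Mg(x).$$
Bounding $D(Q,S)^d \phi(D(Q,S))^\eta \gtrsim (2^kr)^d \phi(2^kr)^\eta$, taking the infimum over $x\in Q$, and summing via the same factoring trick as above gives the claim.

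For \eqref{eqMaximalAllOver}, I would apply \eqref{eqMaximalfar2} with $g \equiv 1$ (so $Mg \equiv 1$) and $r \sim l(Q)$, after separating the boundedly many cubes $S$ with $D(Q,S) \lesssim l(Q)$; these cubes satisfy $l(S) \lesssim l(Q)$ by the Whitney property, so their contribution is $\lesssim \phi(l(Q))^{-\eta}$ once we invoke \AT\ to replace $\phi(D(Q,S))$ by $\phi(l(Q))$ up to a constant.

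The main obstacle is purely bookkeeping: arranging the dyadic split so that the shells truly cover $\{|x-y|>r\}$ (respectively $\{D(Q,S)>r\}$) with uniform constants, controlling the $g$-mass on each shell by a single evaluation of $Mg$ at an arbitrary point of $Q$ in the second inequality, and reconciling the exponent $\eta$ (allowed to be any number $\geq t_1 = \min(q, p - p/q)$) with the exponent $t_1$ in \AD\ via the factoring inequality $\phi(2^kr)^{-\eta} \leq \phi(r)^{-(\eta-t_1)} \phi(2^kr)^{-t_1}$. Assumption \AT\ is used only for the passage between $\phi$ at comparable arguments.
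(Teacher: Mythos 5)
Your proof is correct, and the overall shell-decomposition strategy matches the paper's for \eqref{eqMaximalFar}: the explicit factoring $\phi(2^kr)^{-\eta}\le\phi(r)^{-(\eta-t_1)}\phi(2^kr)^{-t_1}$ is exactly the content of the paper's remark that the bounds in \AD\ persist for larger exponents, so this step is the same idea made explicit. Where you genuinely diverge is \eqref{eqMaximalfar2}: the paper observes the pointwise bound $|x-y|+r\lesssim D(Q,S)$ for $x\in Q$, $y\in S$, invokes the monotonicity of $\phi$ together with \AT\ to dominate the Whitney sum by a single integral over $\Omega$, splits that integral at $|x-y|=r$, and re-uses the already-proved \eqref{eqMaximalFar} on the far part; you instead group the Whitney cubes directly into dyadic shells $\{2^kr<D(Q,S)\le 2^{k+1}r\}$, absorb each shell into a ball $B(x,C2^kr)$ by disjointness of the cubes, and sum exactly as in part \eqref{eqMaximalFar}. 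Both routes are valid and of comparable length: the paper's leverages the scalar inequality so \eqref{eqMaximalfar2} is a two-line reduction, whereas yours is parallel and self-contained but repeats the annulus bookkeeping at the cube level. For \eqref{eqMaximalAllOver}, the separate treatment of cubes with $D(Q,S)\lesssim l(Q)$ is superfluous: since $D(Q,S)=l(Q)+d(Q,S)+l(S)>l(Q)$ for every $S\in\mW$ (including $S=Q$), taking $r=l(Q)$ in \eqref{eqMaximalfar2} already captures every cube, which is the paper's one-line derivation; your extra step is harmless but should be stated carefully (it is the total volume $\sum l(S)^d$ of those nearby cubes that is controlled, not their number, which can be infinite near a rough boundary).
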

\begin{proof}
	Let us look at \eqref{eqMaximalFar}. For clarity, assume that $\Omega \ni x=0$. Since $1/\phi$ is decreasing, we get
	\begin{align*}
	\int_{\red{\Omega \cap \{|y|>r\}}} \frac{\phi(r)^\eta g(y) \, dy}{|y|^d\phi(|y|)^\eta} &\red{\leq} \sum\limits_{k=1}^{\red{N(r)}} \int_{2^{k-1}r<|y|<2^kr} \frac {g(y)}{|y|^d}\frac {\phi(r)^\eta}{\phi(|y|)^\eta} \, dy\\
	&\lesssim \sum\limits_{k=1}^{\red{N(r)}} \frac {\phi(r)^\eta}{\phi(2^{k-1}r)^\eta}\frac 1 {|B_{2^kr}|}\int_{2^{k-1}r<|y|<2^kr} g(y) \, dy \\
	&\leq \sum\limits_{k=1}^{\red{N(r)}} \frac {\phi(r)^\eta}{\phi(2^{k-1}r)^\eta} Mg(0).
	\end{align*}
	The sum is bounded with respect to $r$ thanks to \AD.  
	In order to prove \eqref{eqMaximalfar2} note that if $D(Q,S)>r$, then for every $x\in Q$, $y\in S$, we have $|x-y| + r \lesssim D(Q,S)$. Therefore, by \AT\ \red{and the fact that $\phi$ is increasing,}
	for every $x\in Q$ we have
	\begin{align*}
	&\sum_{S:D(Q,S)>r} \frac {\phi(r)^\eta \int_S g(y) \, dy}{D(Q,S)^d\phi(D(Q,S))^\eta} \lesssim \int_{\red{\Omega}} \frac{\red{\phi(r)^\eta g(y)} \, dy}{(|x-y|+ r)^d\phi(|x-y| + r)^\eta}\\
	&\leq \int_{\red{\Omega \cap\{|x-y| > r\}}} \frac{\phi(r)^\eta g(y) \, dy}{|x-y|^d\phi(|x-y|)^\eta} +  \int_{|x-y|<r} \frac {\phi(r)^\eta g(y) \, dy }{\red{r^d\phi(r)^\eta}}\\
	&\red{\lesssim} \int_{\red{\Omega \cap \{|x-y| > r\}}} \frac{\phi(r)^\eta g(y) \, dy}{|x-y|^d\phi(|x-y|)^\eta} + \frac 1 {|B_r|}\int_{|x-y|<r}g(y)\, dy.
	\end{align*}
	The claim follows from the previous estimate. Since the constants in the inequalities \red{do} not depend on $x$, the same holds for the infimum.
	
	\red{Inequality }\eqref{eqMaximalAllOver} can be obtained by taking $g\equiv 1$ and $r = l(Q)$ in \eqref{eqMaximalfar2}. In that case $D(Q,S) > r$ for every $S$, including $Q$.
\end{proof}
\red{The following lemma is an extension of \cite[(2.7),(2.8)]{PS}.}
\begin{lemma}\label{lem:phil}
	\red{Let $\eta \geq \min(q,p-\frac pq)$, $\kappa\geq \frac 1{q-1}$, assume that \AD\ and \AT\ hold, and assume that $\mW$ is admissible. Then}
	\begin{equation}\label{eq:insh}
	\sum_{R: P \in \Sh_\rho(R)} \phi(l(R))^{-\eta} \lesssim \phi(l(P))^{-\eta}.
	\end{equation}
	Furthermore, if $S\in\Sh_{\red{\rho}}(R)$, then 
	\begin{equation}\label{eq:chain}
	\sum_{P\in [S,R]}\phi(l(P))^{\kappa} \lesssim \phi(l(R))^{\kappa}.
	\end{equation}
\end{lemma}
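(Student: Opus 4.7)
The plan is to prove both inequalities by organizing the cubes into dyadic size classes relative to the reference cube ($P$ in the first inequality, $R$ in the second), bounding the number of cubes in each class by a geometric packing argument, and then summing the resulting geometric-type series using the two bounds in \AD\ (together with \AT\ to handle boundary cases).

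For \eqref{eq:insh}, I would first observe that if $P \in \Sh_\rho(R)$ then $P \subseteq B(x_R, \rho l(R))$, and in particular $l(P) \lesssim l(R)$ (any larger $P$ would not fit inside a ball of diameter $2\rho l(R)$). Fix a dyadic scale index $k \geq -C$ and consider those $R \in \mW$ with $l(R) \approx 2^k l(P)$ and $P \in \Sh_\rho(R)$. The condition $P \subseteq B(x_R, \rho l(R))$ forces $x_R$ to lie in the ball $B(x_P, \rho \cdot 2^k l(P) + O(l(P)))$ of radius $\lesssim \rho \cdot 2^k l(P)$. Since Whitney cubes of this size are pairwise disjoint, a volume count bounds the number of admissible $R$ by a constant depending only on $\rho$ and $d$. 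Hence
$$\sum_{R : P \in \Sh_\rho(R)} \phi(l(R))^{-\eta} \lesssim \sum_{k=-C}^{N(l(P))} \phi(2^k l(P))^{-\eta}.$$
The finitely many negative-$k$ terms are controlled by \AT\ (which gives $\phi(2^{-j}r) \gtrsim \phi(r)$ for bounded $j$), and the terms with $k \geq 1$ are handled by the first bound in \AD: since $\eta \geq t_1$ and $\phi$ is increasing, $[\phi(l(P))/\phi(2^kl(P))]^{\eta} \leq [\phi(l(P))/\phi(2^kl(P))]^{t_1}$, so the series is summable and the bound by $\phi(l(P))^{-\eta}$ follows.

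For \eqref{eq:chain}, I would split the admissible chain $[S,R]$ at the central cube $P_0 = Q_{S,R}$ into two subchains $[S,P_0]$ and $[P_0,R]$. Along $[S, P_0]$ the admissibility condition $l(P) \geq \eps D(S,P)$ yields both $l(P) \lesssim l(R)$ (since the total chain length is $\lesssim D(S,R) \lesssim l(R)$) and, for $P$ with $l(P) \approx 2^{-k} l(R)$, the inclusion of $P$ in a ball centered at $x_S$ of radius $\lesssim 2^{-k}l(R)/\eps$, whence a disjointness/volume argument bounds the count at each dyadic scale $2^{-k}l(R)$ by a constant. This gives
$$\sum_{P \in [S, P_0]} \phi(l(P))^{\kappa} \lesssim \sum_{k=0}^{\infty} \phi(2^{-k} l(R))^{\kappa}.$$
Factoring $\phi(l(R))^\kappa$ and using $\kappa \geq t_2 = 1/(q-1)$ together with the monotonicity $\phi(2^{-k}r)/\phi(r) \leq 1$, the second bound in \AD\ gives convergence with constant $\lesssim \phi(l(R))^\kappa$. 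Along $[P_0,R]$ the inequality $l(P) \geq \eps D(P,R) \geq \eps l(R)$ together with the chain-length bound forces $l(P) \approx l(R)$ for every $P$ in this subchain, and the same volume argument bounds the number of such cubes, giving a contribution of order $\phi(l(R))^\kappa$.

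The main obstacle I anticipate is the bookkeeping in \eqref{eq:chain}: one has to extract from the admissibility definition not only the chain-length bound but also the fact that each cube $P$ at intermediate position is geometrically trapped in a ball centered at one endpoint with radius comparable to $l(P)/\eps$, so that disjointness of Whitney cubes of a given dyadic scale yields a bounded count. Once this geometric packing lemma is carefully stated, the rest of both proofs reduces to applying the two series bounds in \AD\ with $\phi$ increasing, which match the exponents $t_1$ and $t_2$ appearing in the lemma.
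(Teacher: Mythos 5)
Your proposal is correct and matches the paper's strategy: in both inequalities one groups the cubes by dyadic side length, shows the count at each scale is bounded by a constant, and reduces the problem to summing the two series, which are controlled by the two bounds in \AD\ (using $\eta\geq t_1$, $\kappa\geq t_2$ and the monotonicity of $\phi$) together with \AT\ for the finitely many scales exceeding the reference cube. The only difference is one of presentation: the paper cites the analogous fact from Prats--Saksman \cite[(2.8)]{PS} to get the bounded count and the side-length bound $l(P)\lesssim l(R)$, whereas you rederive these from the admissibility condition $l(P)\geq \eps D(S,P)$ (resp.\ $\geq \eps D(P,R)$) and disjointness of Whitney cubes via a volume-packing argument, and you also organize the chain sum by splitting at the central cube --- which the paper absorbs into the cited bound. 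One small point to tighten: in $[S,P_0]$ cubes with $l(P)\gtrsim l(R)$ can occur (up to $l(P)\lesssim l(R)/\eps$), so the index range should start at a fixed $-C$ rather than at $0$, but these boundary terms are again finitely many scales with bounded count and are absorbed by \AT, exactly as in your treatment of $[P_0,R]$.
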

\begin{proof}
	\red{Since the cubes are dyadic, we may and do assume in \eqref{eq:insh} that $l(P) = 2^{p_0}$ for some $p_0\in \mathbb{Z}$. Every $R$ which satisfies $P\in \Sh_\rho(R)$ must be at a distance from $P$ smaller than a multiple of $l(R)$, therefore there can only be a bounded number $K$ of such cubes $R$ with a given side length. Furthermore, the considered cubes must be sufficiently large to contain $P$ in its shadow, that is $l(R) \geq 2^{p_0-l_0}$ with $l_0\in \mathbb{N}_0$ independent of $p_0$. We also obviously have $l(R) < \diam (\Omega)$. Thus, the sum in the first assertion can be bounded from above as follows:
		\begin{align*}
		\sum_{R: P \in \Sh_\rho(R)} \phi(l(R))^{-\eta} \leq K\sum_{k=p_0-l_0}^{p_0+N(2^{p_0})} \phi(2^k)^{-\eta}= K\sum_{k=p_0-l_0}^{p_0} \phi(2^k)^{-\eta}+ K\sum_{k=p_0+1}^{p_0+N(2^{p_0})} \phi(2^k)^{-\eta}.
		\end{align*}
		The sums are estimated by a multiple of $\phi(2^{p_0})^{-\eta}$ using \AT\ and \AD\ respectively, which proves \eqref{eq:insh}.}
	
	\red{As in the proof of \cite[(2.8)]{PS} we may deduce that if $S\in \Sh_\rho(R)$, then there is a bounded number $L$ of cubes $P\in [S,R]$ of a given side length. Furthermore, for every $P\in[S,R]$ we have $l(P)\leq 2^{r_0+l_0}$, where $l(R) = 2^{r_0}$ and $l_0$ is a fixed natural number independent of $S$ and $R$. Therefore we estimate \eqref{eq:chain} as follows:
		\begin{align*}
		\sum_{P\in [S,R]}\phi(l(P))^{\kappa} \leq L\sum_{k=-\infty}^{r_0+l_0} \phi(2^k)^\kappa = L\sum_{k=-\infty}^{r_0}\phi(2^k)^\kappa + L\sum_{k=r_0+1}^{r_0+l_0} \phi(2^k)^\kappa.
		\end{align*}
		The first sum is bounded from above by a multiple of $\phi(2^{r_0})^\kappa$ because of the second assertion of \AD\ and the second is handled by using \AT. This ends the proof.}
\end{proof}
\section{Proof of Theorem \ref{th:main}}
\begin{proof}[Proof of Theorem \ref{th:main}]
	Obviously it suffices to show that the truncated seminorm dominates the full one up to a multiplicative constant.
	
	We will work with dual norms, namely
	\begin{equation}\label{eq:dual}
	\sup\limits_{\substack{g\geq 0\\ \|g\|_{L^{p'}(L^{q'}(\Omega))}\leq 1}} \intom\intom |f(x) - f(y)| |x-y|^{-\frac dq} \phi(|x-y|)^{-1} g(x,y) \, dy \, dx.
	\end{equation}
	From now on, $g$ will be like in formula \eqref{eq:dual}.
	
	First let us take care of the case when $x$ and $y$ are close to each other. By the H\"older's inequality, we get
	\begin{align*}
	&\sum_{Q\in\mW}\int_Q\int_{2Q} \frac{|f(x)-f(y)|g(x,y)}{|x-y|^{\frac dq}\phi(|x-y|)}\, dy \, dx\\ \leq &\sum_{Q\in\mW}\int_Q\Big(\int_{2Q} \frac{|f(x) - f(y)|^q}{|x-y|^d\phi(|x-y|)^q} \, dy\Big)^{\frac 1q} \Big(\int_{2Q} g(x,y)^{q'} \, dy\Big)^{\frac 1{q'}} \, dx\\
	\leq &\bigg(\sum_{Q\in\mW} \int_Q\Big(\int_{2Q} \frac{|f(x) - f(y)|^q}{|x-y|^d \phi(|x-y|)^q} \, dy\Big)^{\frac pq}\, dx\bigg)^{\frac 1p}.
	\end{align*}
	
	What remains is the integral over $(\Omega\times\Omega) \setminus \bigcup_{Q\in\mW} Q\times 2Q = \bigcup_{Q\in\mW}Q\times(\Omega\setminus 2Q) = \bigcup_{Q,S\in\mW}Q\times(S\setminus 2Q)$. We claim that in this case $|x-y| \approx D(Q,S)$. Indeed, since $y \notin 2Q$, we immediately get $l(Q) \red{\leq} |x-y|$. Furthermore, if $l(S)\geq l(Q)$, and $|x-y|\leq 2l(S)$, then $Q\subseteq 5S$, and by the definition of the Whitney decomposition $l(Q) \geq \frac 12 l(S)$ which proves the claim. Therefore, by \AT\
	we get
	\begin{align}\nonumber
	&\sum_{Q,S}\int_Q\int_{S \setminus 2Q} \frac{|f(x) - f(y)|g(x,y)}{|x-y|^{\frac dq}\phi(|x-y|)} \, dy \, dx\\
	 \lesssim &\sum_{Q,S}\int_Q\int_S \frac{|f(x) - f(y)|g(x,y)}{D(Q,S)^{\frac dq}\phi(D(Q,S))} \, dy \, dx.\label{eq:podd}
	\end{align}
	Let $f_Q = \frac 1{|Q|} \int_Q f(x) \, dx$. By the triangle inequality \eqref{eq:podd} does not exceed
	\begin{align}
	&\sum_{Q,S}\int_Q\int_S \frac{|f(x) - f_Q|g(x,y)}{D(Q,S)^{\frac dq}\phi(D(Q,S))}\, dy \, dx\tag{\textbf{A}}\label{A}\\
	+&\sum_{Q,S}\int_Q\int_S \frac{|f_Q - f_{Q_S}|g(x,y)}{D(Q,S)^{\frac dq}\phi(D(Q,S))}\, dy \, dx.\tag{\textbf{B}}\label{B}\\
	+&\sum_{Q,S}\int_Q\int_S \frac{|f_{Q_S} - f(y)|g(x,y)}{D(Q,S)^{\frac dq}\phi(D(Q,S))}\, dy \, dx.\tag{\textbf{C}}\label{C}
	\end{align}
	Using H\"older's inequality and \eqref{eqMaximalAllOver} we can estimate \eqref{A} from above by
	\begin{align}
	&\sum_Q \int_Q |f(x) - f_Q| \Big(\intom g(x,y)^{q'} \, dy\Big)^\frac 1{q'}\Big(\sum_S \frac {l(S)^d}{D(Q,S)^d\phi(D(Q,S))^q}\Big)^{\frac 1q} \, dx\nonumber\\
	\lesssim &\sum_Q \int_Q |f(x) - f_Q|\Big(\intom g(x,y)^{q'} \, dy\Big)^{\frac 1{q'}}\frac 1{\phi(l(Q))} \, dx\label{eq:repA}\\
	\lesssim &\bigg(\sum_Q \int_Q \Big(\frac{|f(x) - f_Q|}{\phi(l(Q))}\Big)^p \, dx\bigg)^{\frac 1p}.\nonumber
	\end{align}
	Now, by the definition of $f_Q$, Jensen's inequality, and \red{\AT}\ we get
	\begin{align*}
	\AAA\, &\red{\lesssim} \bigg(\sum_Q \int_Q \Big(\int_Q\frac{|f(x) - f(y)|^q}{l(Q)^d\phi(l(Q))^q}\, dy\Big)^{\frac pq} \, dx\bigg)^{\frac 1p}\\ &\red{\lesssim} \bigg(\sum_Q \int_Q \Big(\int_Q\frac{|f(x) - f(y)|^q}{|x-y|^d\phi(|x-y|)^q}\, dy\Big)^{\frac pq} \, dx\bigg)^{\frac 1p}.
	\end{align*}
	Let us consider \eqref{B}. If we denote the successor of \red{$P$} in a chain $[Q,S\red{)}$ as \red{$\mN(P)$}, then by the triangle inequality
	\begin{align*}
	\BB \leq \sum_{Q,S}\Bigg(\int_Q\int_S \frac{g(x,y)}{D(Q,S)^{\frac dq}\phi(D(Q,S))}\, dy \, dx \sum_{P\in[Q,Q_S)} |f_P - f_{\mN(P)}|\Bigg).
	\end{align*}
	Recall that $\mN(P)\subseteq 5P$, and for every $P\in [Q,Q_S]$, $Q\in \Sh(P)$. For such $P$ it is also true that $D(P,S) \approx D(Q,S)$, see \cite[(2.6)]{PS}. Therefore, by \AT\ we estimate \BB\ from above by a multiple of
	\begin{align*}
	\red{\sum_P\int_P\int_{5P} \frac{|f(\xi) - f(\zeta)|}{|P||5P|}\, d\xi \, d\zeta}\sum_{Q\in \Sh(P)}\int_Q\sum_S\int_S\frac {g(x,y)}{D(P,S)^{\frac dq}\phi(D(P,S))} \, dy \, dx.
	\end{align*}
	By the H\"older's inequality and \eqref{eqMaximalAllOver} this expression approximately less than or equal to
	\begin{align}
	\sum_P\int_P\int_{5P} \frac{|f(\xi) - f(\zeta)|}{|P||5P|}\, d\xi \, d\zeta \int_{\SH(P)}\Big(\int_\Omega g(x,y)^{q'} \, dy\Big)^{\frac 1{q'}}\frac 1{\phi(l(P))} \, dx.\label{eq:repB} 
	\end{align}
	Let $G(x) = \big(\int_\Omega g(x,y)^{q'} \, dy\big)^{\frac 1{q'}}$. By \cite[Lemma 2.7]{PS} we have $\int_{\SH (P)} G(x) \, dx \hspace{-2.07pt}\lesssim \inf\limits_{y\in P} MG(y) l(P)^d$. Using this, the Jensen's inequality, the H\"older's inequality, and the fact that the maximal operator is continuous in $L^{p'}$, $p'>1$, we obtain
	\begin{align*}
	\BB &\lesssim \sum_P\frac 1{|P||5P|}\frac {l(P)^d}{\phi(l(P))}\int_P\int_{5P} |f(\xi) - f(\zeta)|MG(\zeta)\, d\xi \, d\zeta\nonumber\\
	&\lesssim \sum_P\int_P\frac {MG(\zeta)}{l(P)^{\frac dq}\phi(l(P))}\bigg(\int_{5P} |f(\xi) - f(\zeta)|^q \, d\xi\bigg)^{\frac 1q} \, d\zeta\nonumber\\
	&\lesssim \Bigg(\sum_P\int_P\bigg(\int_{5P}\frac{|f(\xi) - f(\zeta)|^q}{l(P)^d\phi(l(P))^q} \, d\xi\bigg)^{\frac pq} \, d\zeta\Bigg)^{\frac 1p}.
	\end{align*}
	Since $|\xi - \zeta| \leq 5l(P)$, \BB\ is estimated.
	
	Now \red{we will work on \eqref{C}. Since $D(Q,S) \approx l(Q_S)$, by \AT\ we obtain
		\begin{equation*}
		\CC \lesssim \sum\limits_{Q,S}\int_Q\int_S \frac{|f_{Q_S} - f(y)|g(x,y)}{l(Q_S)^{\frac dq}\phi(l(Q_S))}\, dy\, dx.
		\end{equation*}
		Furthermore, for every admissible chain we have $Q,S\in \Sh(Q_S)$, therefore for every $Q,S \in \mW$ we have
		\begin{equation*}
		(Q_S,Q,S)\in \bigcup\limits_{R\in \mW}\{(R,P,P') :  P,P' \in \Sh (R)\}.
		\end{equation*}
		Consequently, the following estimate holds:}
	\begin{align}
	\label{Cstart}\CC&\lesssim \sum\limits_{R\in \mW}\sum\limits_{Q\in\Sh(R)}\sum\limits_{S\in\Sh(R)} \int_Q\int_S \frac{|f_{R} - f(y)|g(x,y)}{l(R)^{\frac dq} \phi(l(R))}\, dy \, dx.
	\end{align}
	By H\"older's inequality the above expression does not exceed
	\begin{align*}
	&\sum_{R\in\mW}\frac{\Big(\int_{\SH(R)} |f_R - f(y)|^q \, dy\Big)^{\frac 1q}}{l(R)^{\frac dq}\phi(l(R))}\int_{\SH(R)}\Bigg(\int_{\SH(R)} g(x,y)^{q'} \, dy\Bigg)^{\frac {1}{q'}} \, dx\\
	\leq &\sum_{R\in\mW}\frac{\Big(\int_{\SH(R)} |f_R - f(y)|^q \, dy\Big)^{\frac 1q}}{l(R)^{\frac dq}\phi(l(R))}\int_{\SH(R)} G(x) \, dx.
	\end{align*}
	\red{By the last estimate of \cite[Lemma 2.7]{PS}}, the fact that $\inf\limits_R MG\leq \frac 1{l(R)^d}\int_R MG$, and the H\"older's inequality we get that
	\begin{align*}
	\CC &\lesssim \sum_{R\in\mW}\frac{1}{l(R)^{\frac {d}q}\phi(l(R))}\Bigg(\int_{\SH(R)} |f_R - f(y)|^q \, dy\Bigg)^{\frac 1q} \int_R MG(\xi) \, d\xi\\
	&\leq \Bigg(\sum_{R\in\mW}\int_R\frac{1}{l(R)^{\frac {dp}q}\phi(l(R))^p}\Big(\int_{\SH(R)} |f_R - f(y)|^q \, dy \Big)^{\frac pq}\, d\xi \Bigg)^{\frac 1p}\hspace{-3pt} \|MG\|_{L^{p'}(\Omega)}\\
	&\leq \Bigg(\sum_{R\in\mW}\frac{l(R)^d}{l(R)^{\frac {dp}q}\phi(l(R))^p}\Big(\sum_{S\in\Sh(R)}\int_{S} |f_R - f(y)|^q \, dy \Big)^{\frac pq}\Bigg)^{\frac 1p}.
	\end{align*}
	Let $[S,R]$ be an admissible chain between $S$ and $R$. Then, after using the inequality $|f_R - f(y)|^q \lesssim |f_R - f_S|^q + |f_S - f(y)|^q$, we get
	\begin{align*}
	\CC^p &\lesssim \sum_{R\in\mW} \frac{l(R)^d}{l(R)^{\frac {dp}q}\phi(l(R))^p}\Bigg(\sum_{S\in\Sh(R)}\Big|\sum_{P\in [S,R)}f_P - f_{\mN(P)}\Big|^{q} l(S)^d\Bigg)^{\frac pq}\\
	&+ \sum_{R\in\mW} \frac{l(R)^d}{l(R)^{\frac {dp}q}\phi(l(R))^p}\Bigg(\sum_{S\in\Sh(R)}\int_S |f_S - f(y)|^q \, dy\Bigg)^{\frac pq} = \CJ + \CD.
	\end{align*}
	If we write $f_P - f_{\mN(P)} = (f_P - f_{\mN(P)}) \frac{\phi(l(P))^{\frac 1q}}{\phi(l(P))^{\frac 1q}}$, then by H\"older's inequality we estimate \CJ\ from above by
	\begin{align*}
	&\sum_{R\in\mW} \frac{l(R)^d}{l(R)^{\frac {dp}q}\phi(l(R))^p}\Bigg(\sum_{S\in\Sh(R)}\sum_{P\in \red{[S,R)}}\frac{|f_P - f_{\mN(P)}|^ql(S)^d}{\phi(l(P))} \Big(\sum_{P\in \red{[S,R)}}\phi(l(P))^{\frac {q'}{q}}\Big)^{\frac {q}{q'}}\Bigg)^{\frac pq}.
	\end{align*}
	By Lemma \ref{lem:phil}
	\begin{equation*}
	\CJ \lesssim \sum_{R\in\mW} \frac{l(R)^d}{l(R)^{\frac {dp}q}}\phi(l(R))^{\frac pq -p}\bigg(\sum_{S\in\Sh(R)}\sum_{P\in \red{[S,R)}}\frac{|f_P - f_{\mN(P)}|^q}{\phi(l(P))} l(S)^d\bigg)^{\frac pq}.
	\end{equation*}
	Let us take $\rho_2$ large enough for $S\in\Sh^2(\red{P}) := \Sh_{\rho_2}(\red{P})$, and $P\in \Sh^2(R)$ to hold. Then $\sum_{S\in\Sh(R)}\sum_{P\in \red{[S,R)}} \lesssim \sum_{P\in \Sh^2(R)}\sum_{S\in\Sh^2(P)}$. We denote the sum of the neighbors of $P$ as $U_P$. Since $\sum_{S\in \Sh^2(P)} l(S)^d\lesssim l(P)^d$, we get that, up to a multiplicative constant, \CJ\ does not exceed
	\begin{equation*}
	\sum_{R\in\mW} \frac{l(R)^d}{l(R)^{\frac {dp}q}}\phi(l(R))^{\frac pq -p}\Bigg(\sum_{P\in\Sh^2(R)}\frac{\red{(l(P)^{-d}\int_{U_P} |f_P - f(\xi)|\, d\xi)}^q}{\phi(l(P))} l(P)^d\Bigg)^{\frac pq}.
	\end{equation*}
	Since $p\geq q$, we can use the H\"{o}lder's inequality with exponent $\frac pq$ to estimate this expression from above by
	\begin{align*}
	&\sum_{R\in\mW} \frac{l(R)^d}{l(R)^{\frac {dp}q}}\phi(l(R))^{\frac pq -p}\Bigg(\sum_{P\in\Sh^2(R)}\frac{\red{(l(P)^{-d}\int_{U_P} |f_P - f(\xi)|\, d\xi)}^p}{\phi(l(P))^{\frac pq}} l(P)^d\Bigg)\Bigg(\sum_{P\in\Sh^2(R)} l(P)^d\Bigg)^{(1 - \frac qp)\frac pq}\\
	\lesssim &\sum_{R\in\mW}\sum_{P\in\Sh^2(R)} \phi(l(R))^{\frac pq -p}\frac{\red{(l(P)^{-d}\int_{U_P} |f_P - f(\xi)|\, d\xi)}^pl(P)^d}{\phi(l(P))^\frac pq}\\
	\lesssim &\sum_{P\in \mW}\frac{\red{(l(P)^{-d}\int_{U_P} |f_P - f(\xi)|\, d\xi)}^pl(P)^d}{\phi(l(P))^\frac pq}\sum_{R: P\in\Sh^2 (R)} \phi(l(R))^{\frac pq -p}.
	\end{align*}
	Furthermore, Lemma \ref{lem:phil} and Jensen's inequality give
	\begin{align}
	\CJ &\lesssim \sum_{P\in \mW}\frac{\red{(l(P)^{-d}\int_{U_P} |f_P - f(\xi)|\, d\xi)}^pl(P)^d}{\phi(l(P))^p}\nonumber\\ &\lesssim \sum_{P\in \mW}\int_{U_P}\frac{|f_P - f(\xi)|^p}{\phi(l(P))^p}\, d\xi\label{same}\\
	&\leq \sum_{P\in \mW}\int_{U_P}\Bigg(\int_P\frac{|f(\zeta) - f(\xi)|^q}{l(P)^d\phi(l(P))^q}\, d\zeta\Bigg)^{\frac pq}\, d\xi.\nonumber
	\end{align}
	Since $U_P \subseteq 5P$ we have finished estimating $\CJ$.
	
	Now we proceed with \CD. By H\"{o}lder's inequality
	\begin{align*}
	\CD &= \sum_{R\in\mW} \frac{l(R)^{d(1-\frac pq)}}{\phi(l(R))^p}\Bigg(\sum_{S\in\Sh(R)}\int_S |f_S - f(\xi)|^q \, d\xi\frac{l(S)^{d(1 - \frac qp)}}{l(S)^{d(1 - \frac qp)}}\Bigg)^{\frac pq}\\
	&\leq \sum_{R\in\mW} \frac{l(R)^{d(1-\frac pq)}}{\phi(l(R))^p}\Bigg(\sum_{S\in\Sh(R)} l(S)^d\Bigg)^{\frac pq - 1}\sum_{S\in\Sh(R)}\frac{(\int_S|f_S - f(\xi)|^q \, d\xi)^{\frac pq}}{l(S)^{d(\frac pq - 1)}}\\
	&\lesssim \sum_{R\in\mW}\sum_{S\in\Sh(R)} \frac{(\int_S|f_S - f(\xi)|^q \, d\xi)^{\frac pq}}{l(S)^{d(\frac pq - 1)}\phi(l(R))^p}.
	\end{align*}
	By rearranging and using Lemma \ref{lem:phil} we obtain 
	\begin{align*}
	\CD &\lesssim\sum_{S\in \mW}\frac{(\int_S|f_S - f(\xi)|^q \, d\xi)^{\frac pq}}{l(S)^{d(\frac pq - 1)}} \sum_{R: S\in\Sh(R)} \phi(l(R))^{-p}\\ &\lesssim \sum_{S\in \mW}\Bigg(\int_S\frac{|f_S - f(\xi)|^q }{l(S)^d}\, d\xi\Bigg)^{\frac pq} \frac {l(S)^d}{\phi(l(S))^p}.
	\end{align*}
	Hence, by Jensen's inequality,
	\begin{equation*}
	\CD \lesssim \sum_{S\in\mW}\frac {l(S)^d}{\phi(l(S))^p} \int_S\frac {|f_S - f(\xi)|^p}{l(S)^d} \, d\xi  = \sum_{S\in\mW}\int_S \frac {|f_S - f(\xi)|^p}{\phi(l(S))^p}\, d\xi.
	\end{equation*}
	Thus we have arrived at the same situation as in \eqref{same} and the proof is finished (we may need to enlarge the constant $C_\mW$ which can be done by diminishing the cubes in the Whitney decomposition).
\end{proof}
\section{Examples of $\phi$}
\subsection{Positive examples}
We will present some examples of kernels which satisfy \AD\ and \AT.
\begin{example}
	Stable scaling is more than enough for \AD\ to hold. Indeed, if we assume that \red{there exist $\beta_1,\beta_2 \in (0,1)$ for which we have
		$$\lambda^{\beta_1} \lesssim \frac{\phi(\lambda r)}{\phi(r)} \lesssim \lambda^{\beta_2},\quad r>0,\ \lambda \leq 1,$$
		then by the first inequality we get \AT\ and by the second inequality the series in \AD\ are geometric and independent of $r$.}
	
	Let us examine the constant $C_2$ in \AD\ for $p=q=2$, $\alpha\in (0,2)$, and the kernels of the form $K(x,y) = (2-\alpha)|x-y|^{-d-\alpha}$, i.e. $\phi (t) = (2-\alpha)t^{\alpha/2}$. \red{In this case $\frac 1{q-1} = \min(q,p-\frac pq) = 1$ and} for every $r>0$ we have
	$$\sum\limits_{k=1}^\infty \frac{\phi(r)}{\phi(2^kr)} = \red{\sum\limits_{k=1}^\infty \frac{\phi(2^{-k}r)}{\phi(r)} =} \sum\limits_{k=1}^\infty \frac 1{(2^{\alpha/2})^k} = \frac{1}{2^{\alpha/2} - 1}.$$
	This quantity is bounded as $\alpha\to 2^-$. Since the constant in \AT\ is also bounded in this case, we get that the comparability in Theorem \ref{th:main} is uniform  for $\alpha \in (\eps,2)$ for every $\eps>0$.
\end{example}
\begin{example}
	Assume that $\Omega$ is bounded. Let \red{$\gamma\in (0,1)$, $\phi(r) = [\log(1+r)]^\gamma$} and let $R=\diam(\Omega)$. \red{Note that for $r > 0$ we have
		$$1\leq \frac {\log(1+2r)}{\log(1+r)}\leq 2.$$
		Indeed, by looking at the derivative we see that the ratio is decreasing thus the inequalities result from its limits at $0^+$ and at $\infty$. Therefore, $\phi$ satisfies \AT. Furthermore for $r<R$ the lower bound can be replaced with a constant $C = C(R) >1$, hence both series in \AD\ become geometric thus it is satisfied.}
\end{example}
\subsection{O-regularly varying functions}\label{sec:zal}
\begin{definition}
	We say that $\phi$ is O-regularly varying at infinity if there exist $a, b\in \mR$ and $A,B,R>0$ such that
	\begin{equation}
	A\bigg(\frac{r_2}{r_1}\bigg)^a \leq \frac{\phi(r_2)}{\phi(r_1)} \leq B\bigg(\frac{r_2}{r_1}\bigg)^b\label{eq:oreg}
	\end{equation}
	holds whenever $R<r_1<r_2$. Analogously, $\phi$ is O-regularly varying at zero if \eqref{eq:oreg} holds for $0<r_1<r_2<R$. The supremum of $a$ and the infimum of $b$ for which \eqref{eq:oreg} is satisfied are called lower, respectively upper, Matuszewska indexes (or lower/upper indexes). 
\end{definition}
\red{A nice short review of the O-regularly varying functions can be found in the work of Grzywny and Kwa\'s{}nicki \cite[Appendix A]{GRZYWNY20181}, for further reading we refer to the book by Bingham, Goldie, and Teugels \cite{bingham}.}

Assume \textbf{A2} and \textbf{A3}. \red{We will show that the assumptions enforce O-regular variation with positive lower index at 0 and, for unbounded $\Omega$, at infinity by using Proposition A.1 of \cite{GRZYWNY20181}.  Note that by \AT\ for $r>0$, $k\in \mathbb{Z}$, and $z\in [2^{k-1}r,2^{k}r]$ we have $\phi(z) \approx \phi(2^{k}r)$.}

\red{We first consider the regular variation at zero using \cite[Proposition A.1 (c)]{GRZYWNY20181}. Let $R = \diam(\Omega)$ and $t_2 = \frac 1{q-1}$. Then, for every $r\in (0,R)$ and $\eta \in \mR$ we have
	\begin{align*}
	\int_0^r z^{-\eta}\phi(z)^{t_2}\frac{dz}{z} \approx \sum_{k=1}^\infty \phi(2^{-k}r)^{t_2} (2^{-k}r)^{-\eta} = r^{-\eta}\phi(r)^{t_2}\sum_{k=1}^\infty \frac{\phi(2^{-k}r)^{t_2}}{\phi(r)^{t_2}} 2^{k\eta}.
	\end{align*}
	By \AD\ the latter sum is finite for $\eta \leq 0$, it is also bounded away from 0 because of \AT. Therefore we obtain that $\phi^{t_2}$ (and thus, also $\phi$) has to be O-regularly varying at 0 with some lower index $a_0>0$, that is
	$$\frac{\phi(r_2)}{\phi(r_1)} \gtrsim \bigg(\frac {r_2}{r_1}\bigg)^{a_0/t_2},\quad 0<r_1\leq r_2\leq R.$$
	The above condition yields a power-type decay at 0 for $\phi$. This could also be obtained using the other summation condition from \AD\ by applying \cite[Proposition A.1 (d)]{GRZYWNY20181}.}

\red{The behavior of $\phi$ at infinity only comes into play when $\Omega$ is unbounded, thus we assume that $\diam (\Omega) = \infty$ for the remainder of this subsection. Let $r>0$, $\eta \in \mR$, and $t_1 = \min(q,p - \frac pq)$. We have 
	\begin{align*}
	\int_r^\infty z^{-\eta} \phi(z)^{-t_1} \frac{dz}{z} \approx \sum_{k=1}^\infty \phi(2^kr)^{-t_1}(2^kr)^{-\eta} = r^{-\eta}\phi(r)^{-t_1}\sum_{k=1}^\infty \frac{\phi(r)^{t_1}}{\phi(2^kr)^{t_1}}2^{-k\eta}.
	\end{align*}
	By \AD\ and \AT\ the sum is finite and bounded away from 0 if $\eta \geq 0$. Thus $\phi^{-t_1}$ is O-regularly varying at infinity with upper index $-a_\infty<0$, which is equivalent to the O-regular variation with lower index $a_\infty$ for $\phi^{t_1}$:
	$$\frac{\phi(r_2)}{\phi(r_1)} \gtrsim \bigg(\frac {r_2}{r_1}\bigg)^{a_\infty/t_1}, \quad R<r_1\leq r_2<\infty.$$}

\subsection{Negative examples}
We will show some examples for which the seminorms \eqref{eq:fullsemi} and \eqref{eq:truncated} are not comparable. Assume for clarity that $p=q=2$.
\begin{example}
	Let $\Omega = (0,1)\subset \mR$, and let $K(x,y)  \equiv 1$. Consider the function $f(x) = x^{-\gamma}$ with $\gamma\in (0,\frac 12)$. A direct calculation shows that
	\begin{equation}\label{eq:exwhole}\int_0^1\int_0^1 (f(x)-f(y))^2 \, dy \, dx = 2\bigg(\frac 1{1-2\gamma} - \frac 1{(1-\gamma)^2}\bigg).
	\end{equation}
	In particular, $f$ belongs to the corresponding Sobolev space (actually the "Sobolev space" is $L^2(\Omega)$ in this case).
	Let $\eps \in (0,1)$. We have
	\begin{align}\nonumber
	&\int_0^1\int_{x-\eps\delta(x)}^{x+\eps\delta(x)}(f(x) - f(y))^2\, dy \, dx\leq \int_0^1\int_{x(1-\eps)}^{x(1+\eps)}(f(x) - f(y))^2\, dy \, dx \\
	= &\red{\frac {\eps}{1-\gamma} - \frac{(1+\eps)^{1-\gamma} - (1-\eps)^{1-\gamma}}{(1-\gamma)^2} + \frac {(1+\eps)^{1-2\gamma} - (1-\eps)^{1-2\gamma}}{(1-2\gamma)(2-2\gamma)} .}\label{eq:excut}
	\end{align}
	As $\gamma \to \frac 12^-$ the ratio of the right hand side of \eqref{eq:exwhole} and \eqref{eq:excut} goes to infinity which shows that in this case the result from Theorem \ref{th:main} does not hold.
\end{example} 
\begin{example}
	The preceding example gives an idea on how to show an analogous fact for any nonzero $K$ such that $K(0,\cdot)\in L^1([0,1])$.
	On the restricted domain of integration we have $x\approx y$. Therefore $|\frac 1{x^\gamma} - \frac 1{y^\gamma}| \lesssim \frac 1{x^\gamma}$, hence
	\begin{align} \nonumber
	&\int_0^1\int_{B(x,\varepsilon \delta(x))} \Big(\frac 1{x^\gamma} - \frac 1{y^{\gamma}}\Big)^2 K(x,y)\, dy \, dx\\\label{eq:trl1} \lesssim &\int_0^1 \frac 1{x^{2\gamma}} \int_{B(x,\eps\delta(x))} K(x,y) \, dy \, dx.
	\end{align}
	On the other hand, since $K$ is nontrivial, there exists $\eta>0$ such that for every $x\in (0,\eta)$ we have $\int_\eta^1 K(x,y) \, dy \geq C > 0$. Therefore
	\begin{align}
	\int_0^1 \int_0^1 \Big(\frac 1{x^\gamma} - \frac 1{y^{\gamma}}\Big)^2 K(x,y) \, dy \, dx &\geq \int_0^{\eta/2} \int_\eta^1 \Big(\frac 1{x^\gamma} - \frac 1{\eta^{\gamma}}\Big)^2 K(x,y) \, dy \, dx\nonumber\\ &\gtrsim \int_0^{\eta/2} \frac 1 {x^{2\gamma}} \int_{\eta}^{1} K(x,y) \, dy \, dx\nonumber\\
	&\gtrsim \int_0^{\eta/2} \frac 1{x^{2\gamma}} \, dx.\label{eq:whl1}
	\end{align}
	Note that \eqref{eq:trl1} is of the form $\int_0^1 \frac {f(x)}{x^{2\gamma}} \, dx$ with $f(x)$ bounded and $\lim\limits_{x\to 0^+} f(x) = 0$. Let us fix an arbitrarily small $\xi > 0$, and let $\rho$ be sufficiently small so that $f(x)\leq \xi$ for $x\in(0,\rho)$. If we separate $\int_0^1 = \int_0^\rho + \int_\rho^1$, then we see that the ratio of \eqref{eq:trl1} and \eqref{eq:whl1} tends to 0 as $\gamma \to \frac 12$.
\end{example}
\begin{remark}
	In previous examples the kernel was integrable. This means that
	\begin{align*}\int_\Omega\int_\Omega (f(x) - f(y))^2 K(x,y) \, \red{dy \, dx} \leq 2\int_\Omega \int_\Omega f(x)^2 K(x,y) \, dy \, dx \leq 2\|f\|_{L^2(\Omega)}^2\|K(0,\cdot)\|_{L^1(\mR^d)}.
	\end{align*}
	Therefore, even though the quadratic forms \eqref{eq:fullsemi} and \eqref{eq:truncated} are incomparable, the Triebel--Lizorkin norm $\|\cdot\|_{F_{p,q}(\Omega)}$ is comparable when we replace the full seminorm with the truncated one.
\end{remark}
\begin{example}
	For $K(x,y) = |x-y|^{-1}$ on $\Omega=(0,1)$ the seminorms also fail to be comparable. Consider the functions $f_n(x) = n \wedge \frac 1x$. Since $$\int_0^1\int_0^{\red{x}} (f(x) - f(y))^2K(x,y) \red{\, dy \, dx} = \frac 12 \int_0^1\int_0^1 (f(x) - f(y))^2K(x,y) \red{\, dy \, dx},$$
	we will assume that \red{$y<x$}, and work only with the integral on the left hand side. Note that for $f_n$, the integral over $(0,\frac 1n)^{2}$ vanishes. We split as follows
	\begin{align}
	\int_0^1 \int_0^{\red{x}} (f_n(x) - f_n(y))^2 K(x,y) \red{\, dy \, dx} &= \int_{1/n}^{1}\int_{1/n}^{\red{x}} \bigg(\frac 1x - \frac 1y\bigg)^2 K(x,y)\, \red{dy \, dx}\label{eq:hil1}\\
	&+ \int_{1/n}^1\int_0^{1/n}\bigg(n - \red{\frac 1x}\bigg)^2K(x,y) \red{\, dy \, dx}. \label{eq:hil2}
	\end{align}
	We first compute the right hand side of \eqref{eq:hil1}. Note that the integrand is equal to $\frac {(\red{x}-\red{y})^2}{\red{y}^2\red{x}^2}\cdot\frac{1}{\red{x}-\red{y}} = \frac {\red{x}-\red{y}}{\red{y}^2\red{x}^2}$.
	\begin{align*}
	\int_{1/n}^1 \int_{1/n}^{\red{x}} \frac{\red{x}-\red{y}}{\red{y}^2\red{x}^2} \, d\red{y} \, d\red{x} = 	\int_{1/n}^1 \int_{1/n}^{\red{x}} \frac{1}{\red{y}^2\red{x}} \, d\red{y} \, d\red{x} - 	\int_{1/n}^1 \int_{1/n}^{\red{x}}\frac{1}{\red{y}\red{x}^2} \, d\red{y} \, d\red{x}= n\log n - 2n + \log n +2.
	\end{align*}
	For \eqref{eq:hil2} we only show the asymptotics.
	\begin{align*}
	&\int_{1/n}^1\int_0^{1/n}\bigg(n - \frac 1{\red{x}}\bigg)^2K(x,y) \, d\red{y} \, d\red{x}\\ = &\int_{1/n}^1 \bigg(n - \frac 1{\red{x}}\bigg)^2\bigg(\log \red{x} - \log \bigg(\red{x}-\frac 1n\bigg)\bigg)\, d\red{x}\\
	= &-n^2\int_{1/n}^1\bigg(1 - \frac 1{n\red{x}}\bigg)^2\log\bigg(1 - \frac 1{n\red{x}}\bigg) \, d\red{x}\\ = &-n\int_0^{1-1/n}\frac{t^2}{(1-t)^2}\log t\, dt.\\
	\end{align*}
	For $n>2$ we split the latter integral: $\int_0^{1-1/n} = \int_0^{1/2} + \int_{1/2}^{1-1/n}$. The first one converges, i.e. it is a (negative) constant. In the second one $t^2 \approx 1$, and $\frac{\log t}{1-t} \approx -1$, therefore
	\begin{equation}
	-n\int_0^{1-1/n}\frac{t^2}{(1-t)^2}\log t\, dt \approx n\bigg(1 + \int_{1/2}^{1-1/n} \frac {dt}{1-t}\bigg) = n(1 + \log n - \log 2).\label{eq:t1mt}
	\end{equation}
	Thus we get the asymptotics
	\begin{align}
	\int_0^1 \int_0^1 (f_n(x) - f_n(y))^2 K(x,y) \, d\red{y} \, d\red{x} \approx n\log n. \label{eq:fulhil}
	\end{align}
	Now consider the truncated case. For clarity, assume that $\epsilon = \frac 12$. 
	\begin{align}
	\int_0^1 \int_{\red{x}/2}^{\red{x}} (f_n(x) - f_n(y))^2 K(x,y) \, d\red{y} \, d\red{x} &= \int_{2/n}^{1}\int_{\red{x}/2}^{\red{x}} \bigg(\frac 1x - \frac 1y\bigg)^2 K(x,y)\, d\red{y} \, d\red{x}\label{eq:trhil1}\\
	&+ \int_{1/n}^{2/n}\int_{1/n}^{\red{x}}\bigg(\frac 1x - \frac 1y\bigg)^2K(x,y) \, d\red{y} \, d\red{x} \label{eq:trhil2}\\
	&+ \int_{1/n}^{2/n}\int_{\red{x}/2}^{1/n}\bigg(n - \frac 1{\red{x}}\bigg)^2K(x,y) \, d\red{y} \, d\red{x}.\label{eq:trhil3}
	\end{align}
	For the right hand side of \eqref{eq:trhil1} and \eqref{eq:trhil2} we note that
	\begin{equation*}
	\int_{2/n}^{1}\int_{\red{x}/2}^{\red{x}} \bigg(\frac 1x - \frac 1y\bigg)^2 K(x,y)\, d\red{y} \, d\red{x} \leq \int_{2/n}^1\int_{\red{x}/2}^{\red{x}} \frac 1{\red{y}^2\red{x}} \, d\red{y} \, d\red{x} = \frac n2 - 1,
	\end{equation*}
	and
	\begin{equation*}
	\int_{1/n}^{2/n}\int_{1/n}^{\red{x}}\bigg(\frac 1x - \frac 1y\bigg)^2K(x,y) \, d\red{y} \, d\red{x} \leq \int_{1/n}^{2/n}\int_{1/n}^{\red{x}} \frac 1{\red{y}^2\red{x}} \, d\red{y} \, d\red{x} = n\log 2 - \frac n2.
	\end{equation*}
	The last integral \eqref{eq:trhil3} is estimated as follows
	\begin{align*}
	&\int_{1/n}^{2/n}\int_{\red{x}/2}^{1/n}\bigg(n - \frac 1{\red{x}}\bigg)^2K(x,y) \, d\red{y} \, d\red{x}\\
	 = &\int_{1/n}^{2/n} \bigg(n - \frac 1{\red{x}}\bigg)^2\bigg(\log \frac {\red{x}}2 - \log \bigg(\red{x} - \frac 1n\bigg)\bigg) \, d\red{x}\\
	= &-n^2 \int_{1/n}^{2/n} \bigg(1 - \frac 1{n\red{x}}\bigg)^2 \bigg(\log \bigg(1 - \frac 1{n\red{x}}\bigg) + \log 2 \bigg) \, d\red{x} \\\leq
	&-n\int_0^{1/2}\frac{t^2}{(1-t)^2}\log t\, dt \approx n.
	\end{align*}
	To conclude, we get
	\begin{equation}\label{eq:trhil}
	\int_0^1\int_{B(\red{x},\delta(\red{x})/2)} (f_n(x) - f_n(y))^2 K(x,y) \, d\red{y} \, d\red{x} \lesssim n.
	\end{equation}
	Since the ratio of the right hand sides of \eqref{eq:fulhil} and \eqref{eq:trhil} diverges as $n\to\infty$, our claim is proven.
\end{example}
\section{The 0-order kernel}
\begin{theorem}\label{th:0order}
	Let $\Omega$ be a bounded uniform domain. Then, if $1< q\leq p<\infty$, then for every $0<\theta\leq 1$
	\begin{align}
	&\bigg(\int_{\Omega}\bigg(\int_{\Omega} \frac{|f(x) - f(y)|^q}{|x-y|^d}\, dy\bigg)^{\frac pq}\, dx\bigg)^{\frac 1p}\label{eq:0order}\\ \lesssim &\,\bigg(\int_{\Omega}\bigg(\int_{B(x,\theta\delta(x))} \frac{|f(x) - f(y)|^q}{|x-y|^d}(\big|\hspace{-1pt}\log|x-y|\big|\,\vee\, 1)^{\red{q}}\, dy\bigg)^{\frac pq}\, dx\bigg)^{\frac 1p}.\label{eq:0orderlog}
	\end{align}
	\red{The constant in the inequality depends only on $p,q,\theta,\Omega$.}
\end{theorem}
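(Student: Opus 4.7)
The plan is to re-run the proof of Theorem~\ref{th:main} almost verbatim with $\phi\equiv 1$, and to replace, each time that \textbf{A2} was invoked, the resulting constant by a logarithmic factor. Since $\Omega$ is bounded, after a harmless scaling we may assume $\diam\Omega\leq 1$, so that for every scale $r<\diam\Omega$ the counting parameter $N(r)\approx|\log r|$. This is the only loss from dropping $\phi$. Crucially, the logarithms will always appear at the sidelength $l(Q)$ of a Whitney cube $Q$ over which the relevant inner integral runs, and because $|x-y|\approx l(Q)$ on that cube, one can at the end trade each $|\log l(Q)|$ for $|\log|x-y||\vee 1$, which is exactly the weight provided by the right-hand side~\eqref{eq:0orderlog}.

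Concretely, I would start from the dual formulation analogous to \eqref{eq:dual} with $\phi\equiv 1$ and split the integration into $\bigcup_{Q\in\mW}Q\times 2Q$ and its complement. The near-diagonal piece is handled in a single H\"older/Jensen step as in the opening display of the proof of Theorem~\ref{th:main}, and it is dominated by \eqref{eq:0orderlog} because the log weight exceeds~$1$. On the complement $|x-y|\approx D(Q,S)$, and the triangle inequality yields the same decomposition into \eqref{A}, \eqref{B}, \eqref{C}. In each of these three sub-estimates I would copy the manipulations of Theorem~\ref{th:main} verbatim, inserting a logarithmic factor whenever \eqref{eqMaximalAllOver} or Lemma~\ref{lem:phil} is used; with $\phi\equiv 1$ those estimates become
$$\sum_{S\in\mW}\frac{l(S)^d}{D(Q,S)^d}\lesssim|\log l(Q)|,\qquad \#\{R:P\in\Sh_\rho(R)\}\lesssim|\log l(P)|,\qquad \#[S,R]\lesssim\bigl|\log(l(S)/l(R))\bigr|.$$
The goal for each sub-estimate is an upper bound of the form
$$\Bigg(\sum_{P\in\mW}\int_P\bigg(\int_{cP}\frac{|f(x)-f(y)|^q}{|x-y|^d}(|\log l(P)|\vee 1)^q\,dy\bigg)^{p/q}\,dx\Bigg)^{1/p},$$
from which \eqref{eq:0orderlog} follows by replacing $|\log l(P)|$ by $|\log|x-y||\vee 1$ (since $|x-y|\approx l(P)$ on the relevant region) and by diminishing $C_\mW$ so that $cP\subset B(x,\theta\delta(x))$ for every $x\in P$.

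The main obstacle will be the logarithmic bookkeeping in the sub-term \CJ\ of the decomposition of $\CC^p$. The original proof invokes \textbf{A2} twice there: once for the chain-length factor in the inner H\"older step (originally $\sum_{P\in[S,R)}\phi(l(P))^{q'/q}$), and once for the outer Lemma~\ref{lem:phil}-style sum $\sum_{R:P\in\Sh^2(R)}\phi(l(R))^{p/q-p}$. With $\phi\equiv 1$ both contribute a logarithm, and one must verify that the H\"older exponents $q'/q$ and $p/q-p$, together with the reindexing from $(R,S,P)$ to $(P,\cdot,\cdot)$, combine them back into a single $(|\log l(P)|)^q$ per cube rather than into a squared logarithm. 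I expect this to succeed through a careful exponent check mirroring the exact sequence of Jensen and H\"older applications in Theorem~\ref{th:main}; together with the simpler analogous accounting in \CD\ and in term \eqref{B}, this exponent arithmetic is the only substantive technical point.
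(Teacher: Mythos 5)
Your plan is the same strategy as the paper's: rerun the proof of Theorem \ref{th:main} with $\phi\equiv 1$ and replace the scaling lemmas by logarithmic counterparts. You correctly locate the crux in the sub-term \CJ. However, two substantive points are missing, and without them the bookkeeping you defer to would in fact fail.

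First, your surrogate replacements for Lemma \ref{lem:phil} are crude counts, $\#[S,R]\lesssim|\log(l(S)/l(R))|$ and $\#\{R:P\in\Sh(R)\}\lesssim|\log l(P)|$, and these are not what the proof needs. If you use the crude chain count, the H\"older step in \CJ\ produces a factor $(\#[S,R])^{1/q'}\approx|\log l(S)|^{1/q'}$ that is indexed by the small cube $S$, not by $R$, and it does not disentangle from the inner sum over $P$. The paper's fix is different: it writes $|f_P-f_{\mN(P)}|=|f_P-f_{\mN(P)}|\tfrac{|\log l(P)|\vee 1}{|\log l(P)|\vee 1}$ \emph{before} applying H\"older, so the dual factor becomes the convergent sum $\sum_{P\in[S,R)}(|\log l(P)|\vee 1)^{-q'}\lesssim(|\log l(R)|\vee 1)^{1-q'}$ (Lemma \ref{lem:phil0}\,(\ref{eq:phil2}), $\eta=q'>1$), a quantity depending only on $R$. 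The surviving factor after raising to $p/q$ is $(|\log l(R)|\vee 1)^{-p/q}$, which is then absorbed by $\sum_{R:P\in\Sh^2(R)}(|\log l(R)|\vee 1)^{-p/q}\lesssim 1$ (Lemma \ref{lem:phil0}\,(\ref{eq:phil3}), $\eta=p/q$). What remains per $P$ is precisely $(|\log l(P)|\vee 1)^q$, with no extra powers. So the two "logarithms" you worry about are not both genuine losses: one is converted into a convergent weighted sum by a deliberate choice of auxiliary weight, and the other is killed by the resulting negative power of the logarithm. A crude count substitution does not expose this structure.

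Second, the step $\sum_{R:P\in\Sh^2(R)}(|\log l(R)|\vee 1)^{-p/q}\lesssim 1$ requires $p/q>1$; it fails when $p=q$. Your verbatim rerun would therefore break in the endpoint case $p=q$, which the statement does not exclude. The paper handles $p=q$ by a separate argument: since then $L^{p'}(L^{q'}(\Omega))$ is the symmetric space $L^{p'}(\Omega\times\Omega)$, one may interchange the roles of $(Q,x)$ and $(S,y)$ by Tonelli and estimate \CC\ exactly as \AAA\ and \BB\ (which, incidentally, yield a \emph{stronger} bound with the logarithm to power $1$ rather than $q$). You should insert this case distinction; without it the proposal has a genuine gap.

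A small additional remark: for the near-diagonal pieces and the terms \AAA, \BB, Lemma \ref{lemMaximal} is replaced by Lemma \ref{lem:max2}, which puts in a single logarithmic factor, so these are in fact dominated by the truncated seminorm with $(|\log|x-y||\vee 1)^1$; you do not need the $q$-th power there. Recording this saves confusion but is not essential since $q\ge 1$.
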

In order to obtain this result we first prove an analogue of Lemma \ref{lemMaximal} for $K(x,y) = |x-y|^{-d}$, i.e. $\phi \equiv 1$. For now every integral is restricted to $\Omega$ by default.
\begin{lemma}\label{lem:max2}
	Let $\Omega$ be a bounded domain with Whitney covering $\mW$. Assume that $g\in L^1_{loc}(\mR^d)$, and $0<r<\diam(\Omega)$. Then for every $Q\in\mW$ and $x\in\Omega$ we have
	\begin{align}\label{eq:max2far}
	\int_{|y-x|>r} \frac{g(y) \, dy}{|y-x|^d}&\lesssim Mg(x)(|\log r|\red{\vee 1}),\\
	\sum_{S:D(Q,S)>r}  \frac{\int_S g(y) \, dy}{D(Q,S)^d}&\lesssim \inf_{x\in Q} Mg(x)(|\log r|\red{\vee 1}).\label{eq:max2far2}
	\end{align}
	and
	\begin{equation}\label{eq:max2allover}
	\sum_{S\in\mathcal{W}} \frac{l(S)^d}{D(Q,S)^d} \lesssim |\log \red{l(Q)}|\red{\vee 1}.
	\end{equation}
\end{lemma}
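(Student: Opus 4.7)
The plan is to repeat the argument of Lemma \ref{lemMaximal} with $\phi \equiv 1$, noting that the only substantive difference is that the dyadic series lose their geometric decay and instead yield a counting of scales, which is precisely of logarithmic size because $\Omega$ is bounded. Concretely, if $N(r)$ is the smallest integer with $2^{N(r)} r > \diam(\Omega)$, then $N(r) \lesssim |\log r| \vee 1$ with a constant depending only on $\diam(\Omega)$.

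For \eqref{eq:max2far}, I would fix $x = 0$ for convenience and decompose $\{y \in \Omega : |y| > r\}$ into the dyadic annuli $A_k = \{2^{k-1} r < |y| < 2^k r\}$ for $k = 1, \ldots, N(r)$. On each annulus the standard estimate
\[
\int_{A_k} \frac{g(y)}{|y|^d}\, dy \lesssim \frac{1}{|B_{2^k r}|} \int_{|y| < 2^k r} g(y)\, dy \leq Mg(0)
\]
holds, and summing over $k$ yields $N(r) \cdot Mg(0) \lesssim Mg(0) (|\log r| \vee 1)$, which is \eqref{eq:max2far}.

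For \eqref{eq:max2far2}, I would proceed exactly as in Lemma \ref{lemMaximal}: if $D(Q,S) > r$ then for $x \in Q$, $y \in S$ we have $|x-y| + r \lesssim D(Q,S)$, so
\[
\sum_{S : D(Q,S) > r} \frac{\int_S g(y)\, dy}{D(Q,S)^d} \lesssim \int_\Omega \frac{g(y)\, dy}{(|x-y| + r)^d} \leq \int_{\Omega \cap \{|x-y| > r\}} \frac{g(y)\, dy}{|x-y|^d} + \frac{1}{r^d} \int_{|x-y| < r} g(y)\, dy.
\]
The first term is controlled by \eqref{eq:max2far} and the second by $Mg(x)$; since the estimate is uniform in $x \in Q$, it also passes to the infimum. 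Finally, \eqref{eq:max2allover} is obtained by specializing \eqref{eq:max2far2} to $g \equiv 1$ and $r = l(Q)$, noting that $D(Q,S) \geq l(Q)$ for all $S$ (including $Q$ itself), so the sum over $\{S : D(Q,S) > r\}$ is the full sum.

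I do not expect any real obstacle here: the structural identities are identical to those of Lemma \ref{lemMaximal} and only the estimate of the dyadic series changes. The one minor point requiring care is to keep the extra $\vee 1$ factor in place to handle the regime where $r$ is comparable to $\diam(\Omega)$ and $|\log r|$ may be small or even vanish; this is taken care of by bounding $N(r)$ from above by a constant multiple of $|\log r| \vee 1$ rather than $|\log r|$ alone.
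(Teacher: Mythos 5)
Your proposal is correct and follows exactly the same route as the paper: dyadic annuli for \eqref{eq:max2far} with the count of scales bounded by $\lceil\log_2(\diam(\Omega)/r)\rceil \lesssim |\log r|\vee 1$, the reduction of \eqref{eq:max2far2} via $|x-y|+r \lesssim D(Q,S)$ and the inequality \eqref{eq:max2far}, and the specialization $g\equiv 1$, $r=l(Q)$ for \eqref{eq:max2allover}. Nothing to add.
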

\begin{proof}
	Let $x\in \Omega$. If we take $R = \diam(\Omega)$, then proceeding as in Lemma \ref{lemMaximal} we get
	\begin{align*}
	\int_{|y-x|\red{>}r} \frac{g(y)\, dy}{|y-x|^d} \leq \sum_{k=1}^{\lceil\log_2(R/r)\rceil} \int_{2^{k-1}r\leq|y-x|<2^k r} \frac{g(y)\,  dy}{|x-y|^d} \lesssim Mg(x) \lceil\log_2(R/r)\rceil \lesssim Mg(x)(|\log r|\red{\vee 1}).
	\end{align*}
	As in the proof of Lemma \ref{lemMaximal}, in order to prove \eqref{eq:max2far2} we use \eqref{eq:max2far}, and we are left with
	\begin{equation*}
	\int_{|x-y|<r} \frac {g(y)\, dy}{(|x-y| + r)^d} \lesssim \frac 1{|B(x,r)|} \int_{B(x,r)} g(y) \, dy \leq Mg(x)(|\log r|\red{\vee 1}).
	\end{equation*}
	Finally, \eqref{eq:max2allover} is obtained by taking $r = l(Q)$ and $g\equiv 1$.
\end{proof}
\red{We will also use the following result similar to Lemma \ref{lem:phil}.
	\begin{lemma}\label{lem:phil0}
		Let $\Omega$ be a bounded uniform domain with admissible Whitney decomposition $\mW$ and let $\rho >0$ and $\eta>1$. Then, for every $S\in \mW$ we have
		\begin{equation}\label{eq:phil1}
		\sum_{R: S\in \Sh_\rho(R)} 1 \lesssim |\log l(S)|\vee 1.
		\end{equation}
		If $S\in \Sh_\rho(R)$, then
		\begin{equation}\label{eq:phil2}
		\sum_{P\in [S,R)} (|\log l(P)|\vee 1)^{-\eta} \lesssim (|\log l(R)|\vee 1)^{1-\eta}.
		\end{equation}
		Furthermore, for every $P\in \mW$
		\begin{equation}\label{eq:phil3}
		\sum_{R: P\in \Sh_\rho(R)} (|\log l(R)|\vee 1)^{-\eta} \lesssim 1.
		\end{equation}
\end{lemma}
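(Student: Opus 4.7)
The plan is to prove all three inequalities using the same combinatorial ingredients that appear in the proof of Lemma \ref{lem:phil}, with the $\phi$-sums replaced by sums involving $|\log l(\cdot)|\vee 1$. Namely, for an admissible Whitney decomposition of a bounded domain there exist constants $K, L \in \mathbb{N}$ and $l_0 \in \mathbb{N}_0$ (depending only on $\rho$ and the admissibility constant) such that (i) for any $T \in \mW$ with $l(T) = 2^{t_0}$, the cubes $R$ with $T \in \Sh_\rho(R)$ have $l(R) = 2^j$ with $t_0 - l_0 \leq j \leq \log_2 \diam(\Omega)$, and at most $K$ such $R$ exist for each $j$; and (ii) if $S \in \Sh_\rho(R)$ with $l(R) = 2^{r_0}$, then any admissible chain $[S,R]$ contains at most $L$ cubes of each dyadic side length, and all of them satisfy $l(P) \leq 2^{r_0 + l_0}$. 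Writing $l(P) = 2^k$ gives $|\log l(P)|\vee 1 \approx |k|\vee 1$ (with absolute constants from $\log 2$), so each of the three inequalities reduces to a counting argument or a tail estimate for $\sum_k(|k|\vee 1)^{-\eta}$.

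For \eqref{eq:phil1}, the range of side lengths in (i) has cardinality at most $\log_2\diam(\Omega) - (s_0 - l_0) + 1 \lesssim |\log l(S)|\vee 1$, and the claim follows after multiplying by $K$. For \eqref{eq:phil3}, (i) yields
\begin{equation*}
\sum_{R:P\in \Sh_\rho(R)}(|\log l(R)|\vee 1)^{-\eta} \leq K\sum_{j=p_0-l_0}^{\log_2 \diam(\Omega)}(|j|\vee 1)^{-\eta},
\end{equation*}
whose upper limit is a fixed constant and whose (possibly infinite) lower tail is dominated by $\sum_{j=-\infty}^{C}(|j|\vee 1)^{-\eta}$, which is finite since $\eta > 1$.

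For \eqref{eq:phil2}, (ii) gives
\begin{equation*}
\sum_{P \in [S,R)} (|\log l(P)|\vee 1)^{-\eta} \lesssim \sum_{k=-\infty}^{r_0+l_0}(|k|\vee 1)^{-\eta},
\end{equation*}
and I would split into two regimes. If $r_0 + l_0 \geq 0$, then $l(R)$ is bounded below by an absolute constant, so $|\log l(R)|\vee 1 \approx 1$ and the sum is bounded by $\sum_{k\in\mathbb{Z}}(|k|\vee 1)^{-\eta} \lesssim 1 \approx (|\log l(R)|\vee 1)^{1-\eta}$. If $r_0 + l_0 < 0$, every index is negative, $|\log l(R)|\approx |r_0|$, and an integral comparison gives $\sum_{k=-\infty}^{r_0+l_0}|k|^{-\eta} \approx |r_0+l_0|^{1-\eta} \approx (|\log l(R)|\vee 1)^{1-\eta}$.

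The only real bookkeeping issue is carefully tracking the regime where $l(R)$ or $l(S)$ is comparable to $\diam(\Omega)$, which is precisely what the ``$\vee 1$'' factor in $|\log l(\cdot)|\vee 1$ is designed to absorb; outside of this regime the proofs mirror those of Lemma \ref{lem:phil}, with the role of \AD\ and \AT\ for $\phi$ replaced by the convergence of $\sum_{k} (|k|\vee 1)^{-\eta}$ for $\eta > 1$.
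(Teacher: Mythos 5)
Your proof is correct and follows essentially the same route as the paper: the same combinatorial bounds on the number of chain/shadow cubes per dyadic side length, the same reduction to sums of $(|k|\vee 1)^{-\eta}$ over intervals of integers, and the same use of $\eta>1$ and the boundedness of $\Omega$ to close them. The only cosmetic difference is that you make the two regimes for $r_0+l_0$ in \eqref{eq:phil2} explicit, whereas the paper simply asserts the tail $\sum_{k\leq r_0+l_0}(|k|\vee 1)^{-\eta}$ is of order $(|r_0|\vee 1)^{1-\eta}$.
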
}
\begin{proof}
	\red{Throughout the proof we let $l(S) = 2^{s_0}$, $l(R) = 2^{r_0}$, $l(P) = 2^{p_0}$, whenever the cubes are fixed.}
	
	\red{Arguing as in the proof of Lemma \ref{lem:phil} we get that there is a limited number of cubes of a given side length contributing to the sum in \eqref{eq:phil1} and the smallest of these cubes must have side length at least $2^{s_0 - l_0}$ for some fixed natural number $l_0 \geq 0$. Therefore, if we let $2^{m_0}$ be the side length of the largest cube in $\mW$, then we have
		\begin{equation*}
		\sum_{R: S\in \Sh_\rho(R)} 1 \lesssim \sum_{k=s_0-l_0}^{{m_0}} 1 = {m_0} - s_0 + l_0 + 1 \approx |\log l(S)| \vee 1.
		\end{equation*}
		As in Lemma \ref{lem:phil}, in \eqref{eq:phil2} we have limited number of cubes of the same length and the cube length cannot be larger than $2^{r_0+l_0}$ and smaller than $2^{s_0 - l_0}$ ($l_0$ may be different than above, but it does not depend on $S$ and $R$). Therefore we estimate the sum in \eqref{eq:phil2} as follows:
		\begin{align*}
		\sum_{P\in [S,R)} (|\log l(P)|\vee 1)^{-\eta} \lesssim \sum_{k=s_0-l_0}^{r_0 + l_0} (|k|\vee 1)^{-\eta} \leq \sum_{k=-\infty}^{r_0+l_0} (|k|\vee 1)^{-\eta}.
		\end{align*}
		Since $\eta > 1$, the latter series is finite and it is of order $(|r_0|\vee 1)^{1-\eta}$, which proves \eqref{eq:phil2}.}
	
	\red{In order to prove \eqref{eq:phil3} we argue as above in terms of the numbers of the cubes, and because of $\eta>1$ we get
		\begin{align*}
		\sum_{R: P\in \Sh_\rho(R)} (|\log l(R)|\vee 1)^{-\eta} \lesssim \sum_{k = p_0 - l_0}^{{m_0}} (|k|\vee 1)^{-\eta} \leq \sum_{k=-\infty}^{{m_0}} (|k|\vee 1)^{-\eta} = C.
		\end{align*}}
\end{proof}
\begin{proof}[Proof of Theorem \ref{th:0order}]
	\red{We proceed as in Theorem \ref{th:main} starting with $1$ in place of $\phi$. The integrals over $Q\times 2Q$ are trivially estimated, because the kernel in \eqref{eq:0orderlog} is larger than the one in \eqref{eq:0order}.}
	
	\red{In \AAA\ and \BB\ the modification is quite straightforward. Lemma \ref{lemMaximal} is used in \eqref{eq:repA} and \eqref{eq:repB} respectively. Using Lemma \ref{lem:max2} instead, we get respectively $(|\log l(Q)|\vee 1)^{\frac 1q}$ and $(|\log l(P)|\vee 1)^{\frac 1q}$. The remaining arguments are conducted with $(|\log r|\vee 1)^{-\frac 1q}$ in place of $\phi(r)$. Note that $(|\log r|\vee 1)^{-\frac 1q} \approx (|\log 2r|\vee 1)^{-\frac 1q}$. We remark that this yields estimates for \AAA\ and \BB\ which are better than the ones in the statement, in fact both expressions are bounded from above by
		\begin{equation}
		\bigg(\int_{\Omega}\bigg(\int_{B(x,\theta\delta(x))} \frac{|f(x) - f(y)|^q}{|x-y|^d}(\big|\hspace{-1pt}\log|x-y|\big|\,\vee\, 1)\, dy\bigg)^{\frac pq}\, dx\bigg)^{\frac 1p}.\label{eq:log0}
		\end{equation}
		Notice the lack of exponent $q$ in the logarithmic term. At this point we distinguish between the case $p=q$ and $p\neq q$. In the former case the test functions $g$ from \eqref{eq:dual} are defined by the condition $\int_\Omega\int_\Omega g(x,y)^{p'}\, dy\, dx \leq~1$, therefore \CC\ can be estimated exactly as \AAA\ and \BB\ because we can interchange the roles of $Q,S$ and $x,y$ using Tonelli's theorem. Thus, in this case we in fact obtain an estimate better than postulated, as the whole expression in \eqref{eq:0order} is approximately bounded from above by \eqref{eq:log0}.}
	
	\red{For the remainder of the proof we assume that $p > q$. The procedure for \CC\ is also similar to the one in the proof of Theorem \ref{th:main}, but the computations are slightly different in terms of the exponents, therefore we give the details. There are no essential changes up to the moment of splitting into \CJ\ and \CD, thus we make it our starting point. As in the proof of Theorem \ref{th:main} we get
		\begin{align*}
		\CD &= \sum_{R\in\mW} l(R)^{d(1-\frac pq)}\bigg(\sum_{S\in\Sh(R)}\int_S |f_S - f(\xi)|^q\, d\xi \frac{l(S)^{d(1-\frac qp)}}{l(S)^{d(1-\frac qp)}}\bigg)^{\frac pq}\\
		&\lesssim \sum_{R\in\mW}\sum_{S\in\Sh(R)} l(S)^{d(1-\frac pq )}\bigg(\int_S|f_S - f(\xi)|^q\,d\xi\bigg)^{\frac pq}.
		\end{align*}
		We rearrange, use \eqref{eq:phil1} and then Jensen's inequality twice to obtain:
		\begin{align*}
		\CD &\lesssim \sum_{S\in \mW}l(S)^{d(1-\frac pq)}\bigg(\int_S|f_S - f(\xi)|^q\,d\xi\bigg)^{\frac pq}\bigg(\sum_{R: S\in \Sh(R)} 1\bigg)\\
		&\lesssim \sum_{S\in \mW} l(S)^{d}(|\log l(S)|\vee 1)\bigg(\frac{1}{l(S)^d}\int_S|f_S - f(\xi)|^q\, d\xi\bigg)^{\frac pq}\\
		&\leq \sum_{S\in \mW} (|\log l(S)|\vee 1)\int_S |f_S - f(\xi)|^p\, d\xi\\
		&\leq \sum_{S\in\mW}\int_S \bigg(\int_S \frac{|f(\zeta) - f(\xi)|^q}{l(S)^d}(|\log l(S)|\vee 1)^{\frac qp}\, d\zeta\bigg)^{\frac pq}\, d\xi,
		\end{align*}
		and thus \CD\ is estimated, since $\frac qp < 1 < q$.}
	
	\red{In order to estimate \CJ\ we write $|f_P - f_{\mN(P)}| = |f_P - f_{\mN(P)}|\frac{|\log l(P)|\vee 1}{|\log l(P)|\vee 1}$ and we use H\"older's inequality with exponent $q$ and \eqref{eq:phil2}:
		\begin{align*}
		\CJ \leq &\hspace{-2.04pt}\sum_{R\in \mW} l(R)^{d(1-\frac pq)}\bigg[\sum_{S\in\Sh(R)}\bigg(\sum_{P\in [S,R)}|f_P - f_{\mN(P)}|^q(|\log l(P)|\vee 1)^q l(S)^d\bigg)\bigg(\sum_{P\in [S,R)}(|\log l(P)|\vee 1)^{-q'}\bigg)^{\frac q{q'}}\bigg]^{\frac pq}\\
		\lesssim &\hspace{-2.04pt}\sum_{R\in \mW}l(R)^{d(1-\frac pq)}(|\log l(R)|\vee 1)^{-\frac pq}\bigg(\sum_{S\in\Sh(R)} \sum_{P \in [S,R)} |f_P - f_{\mN(P)}|^q(|\log l(P)|\vee 1)^{q}l(S)^d\bigg)^{\frac pq}.
		\end{align*}
		By rearranging as in the proof of Theorem \ref{th:main} and by using H\"older's and Jensen's inequalities we further estimate \CJ\ from above by a multiple of
		\begin{align*}
		 &\sum_{R\in\mW} l(R)^{d(1 - \frac pq)} (|\log l(R)|\vee 1)^{-\frac pq} \bigg(\sum_{P\in\Sh^2(R)}\sum_{S\in \Sh^2(P)} \big(\int_{U_P} \frac{|f_P - f(\xi)|}{l(P)^d}\, d\xi\big)^q(|\log l(P)|\vee 1)^q l(S)^d\bigg)^{\frac pq}\\
		\lesssim &\sum_{R\in\mW} l(R)^{d(1 - \frac pq)} (|\log l(R)|\vee 1)^{-\frac pq}\bigg(\sum_{P\in\Sh^2(R)} \big(\int_{U_P} \frac{|f_P - f(\xi)|}{l(P)^d}\, d\xi\big)^q(|\log l(P)|\vee 1)^q l(P)^d\bigg)^{\frac pq}\\
		\leq &\sum_{R\in\mW}\sum_{P\in\Sh^2(R)} (|\log l(R)|\vee 1)^{-\frac pq} (|\log l(P)|\vee 1)^p\int_{U_P} |f_P - f(\xi)|^p\, d\xi.
		\end{align*}
		We rearrange once more and use \eqref{eq:phil3} (recall that $p>q$) and Jensen's inequality to get that, up to a multiplicative constant, \CJ\ does not exceed
		\begin{align*}
		&\sum_{P\in \mW}(|\log l(P)|\vee 1)^p\int_{U_P} |f_P - f(\xi)|^p\, d\xi\bigg(\sum_{R: P\in \Sh^2(R)} (|\log l(R)|\vee 1)^{-\frac pq}\bigg)\\
		\lesssim &\sum_{P\in \mW}(|\log l(P)|\vee 1)^p\int_{U_P} |f_P - f(\xi)|^p\, d\xi\\
		\lesssim &\sum_{P \in \mW} \int_{U_P}\bigg(\int_P\frac{|f(\zeta) - f(\xi)|^q}{l(P)^d}(|\log l(P)|\vee 1)^q\,d\zeta\bigg)^{\frac pq}\,d\xi.
		\end{align*}
		This finishes the proof.}
\end{proof}
Since the kernel in \eqref{eq:0orderlog} is significantly larger than the one in \eqref{eq:0order}, it is plausible that the converse inequality is not true. We will show the existence of a counterexample when $\Omega = (0,1)$, $p=q=2$. For an open interval $I\subseteq \mR$ we let $$F_0(I) = \bigg\{f \in L^2(I): \int_I\int_I \frac{(f(x)-f(y))^2}{|x-y|}\, dy \, dx < \infty\bigg\},$$
$$F_{\log}(I) = \bigg\{f\in L^2(I):\int_I\int_I\frac{(f(x)-f(y))^2}{|x-y|} (|\log|x-y|| \vee 1) \, dy \, dx <\infty \bigg\}.$$
\red{We note that in $F_{\log}(I)$ the logarithm is in power 1. This suffices for our present purpose, because $q>1$ in Theorem \ref{th:0order}.}
\begin{theorem}
	For every $\theta\in (0,1]$, there exists $f \in F_0(0,1)\cap L^\infty(0,1)$ such that
	\begin{equation}\label{eq:lognorm}
	\int_0^1 \int_{B(x,\theta \delta(x))} (f(x) - f(y))^2 |x-y|^{-1}(|\log|x-y||\vee 1) \, dy \, dx = \infty.
	\end{equation}
\end{theorem}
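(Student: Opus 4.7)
The plan is to build a bounded step function whose oscillations become ever faster as $x\to 0$, with scales tuned so the $F_0$ seminorm telescopes to a convergent geometric-type series while the extra $|\log|x-y||$ factor amplifies each dyadic scale enough to force divergence. For $n\geq 1$ set $J_n=[2^{-n-1},2^{-n})$, choose $M_n=\lceil e^{2^{n/2}}\rceil$, partition $J_n$ into $M_n$ equal subintervals $I_{n,1},\dots,I_{n,M_n}$ of length $s_n=|J_n|/M_n$, and define $f(x)=(-1)^k$ for $x\in I_{n,k}$, with $f\equiv 0$ on $[1/2,1)$. Then $\|f\|_\infty\leq 1$, so the $L^\infty$ condition is trivial.

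To verify $f\in F_0(0,1)$ I split the double integral into within-$J_n$ and cross contributions. For $n\neq m$, $(f(x)-f(y))^2\leq 4$ and a direct estimate gives $\int_{J_n}\int_{J_m}|x-y|^{-1}\,dy\,dx\lesssim 2^{-\min(n,m)}$ for adjacent $n,m$ and $\lesssim 2^{-\max(n,m)}$ otherwise, so the cross total is summable. For the within-$J_n$ part, the standard computation for a $\pm 1$ alternating step function, summing $\int_{I_{n,k+l}}|x-y|^{-1}\,dy\asymp 1/l$ over odd piece-separations $l$, yields
\[
\int_{J_n}\int_{J_n}\frac{(f(x)-f(y))^2}{|x-y|}\,dy\,dx \lesssim |J_n|\log M_n = 2^{-n-1}\cdot 2^{n/2},
\]
which is summable in $n$. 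Hence $f\in F_0(0,1)$.

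For the log-truncated seminorm I fix $c=c(\theta)>0$ small enough that for $x\in J_n^\ast := [2^{-n-1}+c\,2^{-n},\,2^{-n}-c\,2^{-n})$ (of measure $\gtrsim 2^{-n}$) the ball $B(x,c\,2^{-n})$ lies inside both $J_n$ and $B(x,\theta\delta(x))$. For such $x$ this smaller ball contains at least $cM_n$ consecutive alternating pieces, so the log-weighted analogue of the previous calculation, using $\int_{I_{n,k+l}}|\log|x-y||\,|x-y|^{-1}\,dy\asymp |\log(ls_n)|/l$, gives
\[
\int_{B(x,\theta\delta(x))}(f(x)-f(y))^2\,\frac{|\log|x-y||\vee 1}{|x-y|}\,dy \gtrsim \sum_{\substack{l\text{ odd}\\ l\leq M_n^{1/2}}}\frac{|\log s_n|}{l} \gtrsim \log^2 M_n = 2^n,
\]
using $|\log(ls_n)|=|\log s_n|-\log l\geq \tfrac 12|\log s_n|\geq\tfrac 12 \log M_n$ for $l\leq M_n^{1/2}$. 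Integrating over $x\in J_n^\ast$ yields a contribution of order $1$ for each $n$, whose sum in $n\geq 1$ is $\infty$, proving \eqref{eq:lognorm}.

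The main obstacle is pinpointing the correct balance of scales: the alternating step function produces an inner sum of order $\log M_n$ in the unweighted case and $\log^2 M_n$ in the log-weighted case, so the logarithmic weight gives a net gain of a full factor $\log M_n$. The choice $M_n=\lceil e^{2^{n/2}}\rceil$ is then tailored exactly so that $\sum_n|J_n|\log M_n$ converges while $\sum_n|J_n|\log^2 M_n$ diverges; any faster growth of $M_n$ breaks the first condition and any slower growth breaks the second. The remaining bookkeeping---the dyadic geometry near the boundary, the lower bound on $|J_n^\ast|$, and the adjacency correction in the cross estimate---is routine.
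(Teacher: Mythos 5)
Your proof is correct, and it takes a genuinely different route from the paper. The paper works spectrally: after reducing to a fixed sub-interval $(\tfrac{n}{2n+1},\tfrac{n+1}{2n+1})$ on which the truncation is harmless, it uses Parseval to characterize $F_0$ and $F_{\log}$ for bounded periodic functions by the Fourier conditions $\sum|\widehat f(m)|^2\log|m|<\infty$ and $\sum|\widehat f(m)|^2\log^2|m|<\infty$ respectively, and then constructs a lacunary series ($\widehat f(m)=l^{-3/2}$ at $m=(2n+1)2^l$, zero otherwise) to separate the two. You instead build an explicit $\pm1$ step function oscillating on dyadic blocks $J_n=[2^{-n-1},2^{-n})$ with $M_n\approx e^{2^{n/2}}$ equal pieces, and estimate the double integral directly: the intra-block $F_0$ contribution is $\lesssim |J_n|\log M_n=2^{-n/2-1}$ (summable), the cross-block and far contributions are summable by crude bounds, while for the log-weighted kernel each block contributes $\gtrsim |J_n|\log^2 M_n\approx 1$ already in the truncated range $B(x,c\,2^{-n})\subseteq B(x,\theta\delta(x))\cap J_n$, forcing divergence. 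The Fourier route is cleaner in that it converts both memberships into transparent $\ell^2$-weight conditions and makes the gap between $\log|m|$ and $\log^2|m|$ the whole story; yours is entirely elementary and self-contained, and it localizes the divergence at the boundary point $0$ rather than relying on periodicity plus the restriction-to-a-subinterval trick.

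Two small points worth tightening in your write-up: you should take $c<\min(\theta/2,1/4)$ with strict inequality so that $J_n^\ast$ has measure comparable to $2^{-n}$, and in the lower bound for the log-weighted inner integral it is cleaner to sum only over odd $l\ge 3$ (so that $|x-y|\asymp ls_n$ uniformly), which does not affect the $\log^2M_n$ order. Neither affects the validity of the argument.
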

\begin{proof}$ $
	
	\textbf{Step 1.} First, note that the finiteness of the left hand side of \eqref{eq:lognorm} implies that $f\in F_{\log}(\frac {n}{2n+1}, \frac{n+1}{2n+1})$ for a sufficiently large $n\in \mathbb{N}$. Indeed, if $\theta \geq \frac 1n$ for some natural number $n \geq 2$, then \begin{align}\label{eq:ciag}\int_0^1\int_{B(x,\theta\delta(x))} (\ldots)\geq \int_0^1 \int_{B(x,\delta(x)/n)} (\ldots) \geq \int_{\frac {n}{2n+1}}^{\frac {n+1}{2n+1}}\int_{B\big(x,\frac 1{2n+1}\big)} (\ldots) \geq \int_{\frac {n}{2n+1}}^{\frac {n+1}{2n+1}}\int_{\frac {n}{2n+1}}^{\frac {n+1}{2n+1}} (\ldots).\nonumber\end{align}
	We fix a number $n$ for which \eqref{eq:ciag} is satisfied.
	%
	
	\textbf{Step 2.} In order to construct the counterexample we will use the asymptotics of the Fourier expansions of functions in $F_0(I)$ and $F_{\log}(I)$. \red{We adopt the following convention for the Fourier coefficients of an integrable function $f$ on an interval $(a,b)$:
		\begin{equation*}
		\widehat{f}(m) = \frac{1}{b-a}\int_a^b f(x) e^{-\frac{2\pi imx}{b-a}}\, dx,\quad m\in \mathbb{Z}.
		\end{equation*}
		Below, by $\widehat{f}(m)$ we mean the Fourier coefficient on $(0,1)$.} Let $f$ satisfy $f(x+1) = f(x)$ for $x\in \mR$. Let $K(x,y)$ be equal to $|x-y|^{-1}$ (resp. $|x-y|^{-1}(|\log|x-y||\vee 1)$). \red{We claim that given $f\in L^\infty(0,1)$, it belongs to $F_0(0,1)$} (resp. $F_{\log} (0,1)$) if and only if
	\begin{align*}
	\int_0^1\int_0^1 (f(x) - f(x-h))^2 K(0,h) \, dh \, dx < \infty.
	\end{align*}
	Indeed, we have
	\begin{align*}
	\int_0^1\int_0^1 (f(x) - f(y))^2 K(x,y) \, dy \, dx =\, &2\int_0^1 \int_0^x (f(x) - f(y))^2 K(x,y) \, \red{dy \, dx}\\
	=\, & \red{2}\int_0^1\int_0^x (f(x) - f(x-h))^2 K(0,h) \, dh \, dx.
	\end{align*}
	Therefore, it suffices to verify that $\int_0^1 \int_x^1 (f(x) - f(x-h))^2 K(0,h) \, dh \, dx <\infty$ for bounded $f$. Clearly we can assume that $K(x,y) = |x-y|^{-1}(|\log|x-y||\vee 1)$.
	\begin{align*}&\int_0^1\int_x^1 (f(x) - f(x-h))^2 K(0,h) \, dh \, dx\lesssim \int_0^1\int_x^1 \frac{(-\log h) \vee 1}{h} \, dh \, dx\\
	= &\int_0^{1/e} \int_x^{\red{1/e}} \frac{-\log h}{h} \, dh \, dx + \red{\int_0^{1/e} \int_{1/e}^{1} \frac{1}{h} \, dh \, dx+} \int_{1/e}^1 \int_x^1 \frac{1}{h} \, dh \, dx.
	\end{align*}
	\red{All the} integrals are finite, therefore the claim is proved.
	
	By Parseval's identity and Tonelli's theorem we get
	\begin{align*}
	&\int_0^1K(0,h)\int_0^1 (f(x) - f(x-h))^2 \, \red{dx} \, \red{dh} \\= &\int_0^1 K(0,h) \sum_{m \in \mathbb{Z}} |\widehat{f}(m)|^2 |1 - e^{2\pi i m h}|^2 \, dh\\
	=&\sum_{m\in\mathbb{Z}} |\widehat{f}(m)|^2 \int_0^1 |1 - e^{2\pi i m h}|^2 K(0,h) \, dh\\ = &2\sum_{m\in\mathbb{Z}} |\widehat{f}(m)|^2 \int_0^1 (1-\cos(2\pi m h)) K(0,h) \, dh.
	\end{align*}
	Now let us inspect the remaining integrals for both cases of $K$. For $m\neq 0$ we have
	\begin{align*}
	\int_0^1 \frac{1-\cos (2\pi m h)}{h} \, dh = \int_0^{|m|} \frac{1-\cos (2\pi h)}{h} \, dh \approx \log|m|.
	\end{align*}
	In the logarithmic case
	\begin{align*}
	\int_0^1 \frac{1-\cos(2\pi m h)}{h} (-\log h \vee 1) \, dh = \int_0^{|m|}\frac{1-\cos(2\pi h)}{h} (-\log \frac {h}{|m|} \vee 1) \, dh\approx \log^2|m|.
	\end{align*}
	To summarize, for bounded functions we can characterize $F_0(0,1)$ by
	\begin{equation}\label{eq:0char}
	\sum_{m\in \mathbb{Z}, m \neq 0} |\widehat{f}(m)|^2 \log|m| < \infty
	\end{equation}
	and $F_{\log}(0,1)$ by 
	\begin{equation}\label{eq:logchar}
	\sum_{m\in \mathbb{Z}, m\neq 0} |\widehat{f}(m)|^2 \log^2|m| < \infty.
	\end{equation}
	The same characterizations hold for $I = (\frac {n}{2n+1}, \frac{n+1}{2n+1})$ and the respective Fourier expansion.
	
	\textbf{Step 3.} We give an example of $f\in F_0(0,1)\cap L^\infty(0,1)$ for which \eqref{eq:0char} is satisfied and \eqref{eq:logchar} is not. For $m = (2n+1) 2^l$, $l=1,2,\ldots$, we put $\widehat{f}(m) = \frac 1{l^{3/2}}$. For other $m$ we let $\widehat{f}(m) = 0$. Note that $f$ is $\frac 1{2n+1}$--periodic. Therefore the $j$-th Fourier coefficient of $f$ on $(\frac{n}{2n+1},\frac{n+1}{2n+1})$ is equal to its $(2n+1)\cdot j$-th Fourier coefficient on $(0,1)$. Since $(\widehat{f}(m))_{m\in\mathbb{Z}}$ is summable, $f$ is bounded. Furthermore $l^{-3}\log[ (2n+1)2^l] = l^{-2}\log 2 + l^{-3}\log (2n+1)$ and $l^{-3} \log^2(2^l) \approx l^{-1}$. Therefore \eqref{eq:0char} is satisfied and \eqref{eq:logchar} is not. By \eqref{eq:ciag}, the proof is finished.
	
\end{proof}
\section{Uniformity is not a sharp condition}\label{sec:pasy}
In this section we examine the strip $\mR\times (0,1)$ which is a non-uniform domain. We will show that the comparability fails for fractional Sobolev spaces with $\alpha < 1$. Then we \red{prove} that for $\alpha >1$ and slightly more general kernels the comparability holds. Later, we present a higher-dimensional case \red{in which} the comparability may also hold for $\alpha < 1$ in non-uniform domains. For clarity of the presentation, we assume that $p=q=2$.
\begin{example}
	Let $\Omega = \mR \times (0,1)$ and let $K(x,y) = |x-y|^{-2-\alpha}$. Note that $\Omega$ is not uniform --- if we take two cubes far from each other we will fail to find a sufficiently large central cube in any chain connecting them.
	
	We will show for $\alpha \in (0,1)$ the comparability does not hold. Consider a sequence of functions $(f_n)$ given by the formula $f_n(x_1,x_2) = (1 - \frac{|x_1|}{n})\vee 0$. Since $f_n$ are constant on the second variable, for every $\xi\in (0,1)$ we have
	\begin{align*}
	\int_{\Omega}\int_\Omega \frac{(f_n(x) - f_n(y))^2}{|x-y|^{2+\alpha}} \red{\, dy \, dx} = \int_{\mR}\int_{\mR}(f_n(x_1,\xi) - f_n(y_1,\xi))^2\int_0^1\int_0^1 |x-y|^{-2-\alpha} \red{\, dy_2 \, dx_2 \, dy_1 \, dx_1}.
	\end{align*}
	Let the integral over $(0,1)\times(0,1)$ be called $\kappa(x_1,y_1)$. We claim that $\kappa(x_1,y_1)$ is comparable with $|x_1-y_1|^{-2-\alpha}$ if $|x_1 - y_1| \geq 1$ and with $|x_1-y_1|^{-1-\alpha}$ otherwise. Indeed, we have $|x-y| \approx |x_1 - y_1| + |x_2 - y_2|$. If $|x_1 - y_1| \geq 1$, then
	\begin{equation*}
	\int_0^1\int_0^1 |x-y|^{-2-\alpha} \red{\, dy_2\, dx_2} \approx |x_1-y_1|^{-2-\alpha}\int_0^1\int_0^1 \, \red{dy_2\, dx_2} = |x_1 - y_1|^{-2-\alpha}.
	\end{equation*}
	For $|x_1 - y_1|<1$ note that for fixed $a>0$
	\begin{align*}
	&a^{1+\alpha}\int_0^1 \int_0^1 (a + |x_2-y_2|)^{-2-\alpha} \, \red{dy_2 \, dx_2}\\ \approx\, &a^{1+\alpha}\int_0^1\int_0^{\red{x}_2} (a + \red{x_2 - y_2})^{-2-\alpha} \, \red{dy_2 \, dx_2}\\
	=\, &\frac{a^{1+\alpha}}{\red{1+\alpha}}\int_0^1 (a^{-1-\alpha} - (a+\red{x_2})^{-1-\alpha}) \, \red{dx_2}\\ = &\red{\frac 1{1+\alpha}} - \red{\frac 1{1+\alpha}}\int_0^1 \big(1 + \frac{\red{x_2}}{a}\big)^{-1-\alpha}\, \red{dx_2}.
	\end{align*}
	\red{For $a=|x_1 - y_1|<1$ we have $x_2/a > x_2$, so the latter integral is bounded from above by $C\in (0,1)$.} Thus the whole expression is approximately \red{equal to a positive} constant which proves our claim.
	
	The shape of $\Omega$ grants that for every $\theta\in(0,1]$ we have
	\begin{align*}
	&\int_{\Omega}\int_{B(\red{x,\theta\delta(x)})} \frac{(f_n(x) - f_n(y))^2}{|x-y|^{2+\alpha}} \red{\, dy \, dx}\\ \leq &\int_{\mR}\int_{B(\red{x_1},1)} (f_n(x_1,\xi) - f_n(y_1,\xi))^2 \kappa(x_1,y_1) \, \red{dy_1 \, dx_1}. 
	\end{align*}
	To simplify the notation we will write $f_n(x_1) = f_n(x_1,\xi)$ for some fixed $\xi\in (0,1)$, $x\in\mR$. Since $f_n$ is Lipschitz \red{with constant $1/n$}, we have
	\begin{align*}
	&\int_{\mR}\int_{B(\red{x_1},1)} (f_n(x_1) - f_n(y_1))^2\kappa(x_1,y_1) \, \red{dy_1 \, dx_1}\\ \approx &\int_{\mR} \int_{B(\red{x_1},1)} (f_n(x_1) - f_n(y_1))^2 |x_1-y_1|^{-1-\alpha} \red{\, dy_1 \, dx_1}\\
	=&\int_{-n-1}^{n+1}\int_{B(\red{x_1},1)} (f_n(x_1) - f_n(y_1))^2|x_1-y_1|^{-1-\alpha}\, \red{dy_1 \, dx_1}\\ \lesssim &\frac 1{n^2} \int_{-n-1}^{n+1}\int_{B(\red{x_1},1)} |x_1-y_1|^{1-\alpha}\, \red{dy_1 \, dx_1} \approx \frac 1n.
	\end{align*}
	Thanks to the fact that $\alpha < 1$, the full seminorm is significantly greater as $n\to\infty$:
	\begin{align*}
	&\int_{\mR}\int_{\mR} (f_n(x_1) - f_n(y_1))^2 \kappa(x_1,y_1) \, \red{dy_1 \, dx_1} \gtrsim \int_{-\frac n2}^{0}\int_{-\infty}^{-n} |x_1-y_1|^{-2-\alpha} \red{\, dy_1 \, dx_1}\\
	= &\int_{-\frac n2}^{0} \frac 1{1+\alpha} \frac 1{(\red{x_1}+n)^{1+\alpha}}\, d\red{x_1}
	\geq \frac 1{1+\alpha}\frac{n/2}{\red{n}^{1+\alpha}} \approx \frac 1{n^\alpha}.
	\end{align*}
\end{example}	
\begin{lemma}\label{lem:tech}
	Let $\Omega = \mR\times (0,1)$. If $f\colon \mR^2 \to [0,\infty)$ is radial, then \red{$\int_\Omega (1\vee |x|)f(x) \, dx \approx \int_{\mR^2} f(x) \, dx < \infty$ with a constant independent of $f$.}
\end{lemma}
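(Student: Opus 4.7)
The plan is to exploit the radiality by passing to polar coordinates on $\mR^2$. Writing $f(x) = g(|x|)$ for some nonnegative $g$ on $[0,\infty)$, the right-hand side becomes $\int_{\mR^2} f(x)\, dx = 2\pi\int_0^\infty g(r) r\, dr$, and the weighted strip integral becomes $\int_\Omega (1\vee|x|)f(x)\, dx = \int_0^\infty (1\vee r) g(r) r A(r)\, dr$, where $A(r)$ denotes the angular measure of $\{\theta \in [0,2\pi) : (r\cos\theta, r\sin\theta) \in \Omega\}$. The geometric input is the explicit formula $A(r) = \pi$ for $r \leq 1$ and $A(r) = 2\arcsin(1/r)$ for $r > 1$, which comes directly from the condition $0 < r\sin\theta < 1$ defining $\Omega$.

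With this in hand the comparison reduces to matching the contributions from the regime $r \geq 1$, since for $r \leq 1$ we have $A(r) \equiv \pi$ and $(1\vee r) \equiv 1$, so this part of the left-hand side equals $\pi\int_0^1 g(r) r\, dr$, in uniform comparison with the corresponding piece of the right-hand side. For $r > 1$ the left-hand side contributes $2\int_1^\infty r^2 \arcsin(1/r) g(r)\, dr$, while the right-hand side contributes $2\pi\int_1^\infty g(r) r\, dr$. The crucial estimate is therefore $r^2 \arcsin(1/r) \approx r$ uniformly for $r \geq 1$, which follows at once from the elementary two-sided bound $s \leq \arcsin(s) \leq (\pi/2) s$ on $[0,1]$ (an immediate consequence of the convexity of $\arcsin$ on $[0,1]$ together with $\arcsin(0) = 0$ and $\arcsin(1) = \pi/2$).

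The conceptual point worth emphasizing is the cancellation between the linear growth of the weight $(1\vee|x|)$ and the progressive shrinking of the angular aperture of the strip viewed from afar: at distance $r \gg 1$ the strip $\Omega$ intersects the circle $|x|=r$ in an arc of total length $\approx 2$ rather than $2\pi r$, and this missing factor of $r$ is restored precisely by the weight. There is no substantive obstacle in the argument; one only needs to split at $r = 1$ and carry the absolute constants through the $\arcsin$ bound. Finiteness of either side then transfers to the other via the comparison, so the $<\infty$ clause in the statement requires no separate hypothesis beyond whichever side one starts with.
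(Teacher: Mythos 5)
Your proof is correct. The geometric input matches what the paper uses: the key point in both is that at distance $r \gg 1$ from the origin, the strip $\mR\times(0,1)$ occupies an angular fraction of order $1/r$ of the circle $|x| = r$, and this missing factor of $r$ is exactly restored by the weight $1\vee|x|$. The execution differs, however. The paper discretizes: it decomposes $\mR^2$ into annuli $B_n\setminus B_{n-1}$, notes that $|\Omega\cap(B_n\setminus B_{n-1})| \approx \frac 1n |B_n\setminus B_{n-1}|$, and uses this together with the radiality of $f$ (so that $f$ is essentially constant on each thin annulus) to replace $\int_{\Omega\cap(B_n\setminus B_{n-1})} n f$ by $\int_{B_n\setminus B_{n-1}} f$ and then sum. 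You instead pass directly to polar coordinates, compute the angular measure $A(r) = \pi$ for $r\le 1$ and $A(r) = 2\arcsin(1/r)$ for $r>1$, and use the elementary two-sided bound on $\arcsin$ to get $rA(r)\approx 1$ uniformly for $r\ge 1$. Your route is the continuous, quantitatively sharp version of the same idea: it yields explicit absolute constants (the worst constant coming from $\arcsin(s)\le\frac\pi 2 s$), whereas the paper's annular argument is a bit quicker to state but leaves the constants implicit in the area comparison. Both are valid and about equally short; there is no gap in either.
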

\begin{proof}
	Note that for $n \in \mathbb{N}$ the area of $\Omega\cap(B_n\setminus B_{n-1})$ is comparable to the $1/n$-th of the area of the annulus $B_n\setminus B_{n-1}$. Therefore by the rotational symmetry of $f$ we get
	\begin{align*}
	\int_\Omega \red{(1\vee |x|)}f(x) \, dx \approx \sum\limits_{n\in \mathbb{N}} \int_{\Omega\cap(B_n\setminus B_{n-1})}nf(x) \, dx &\approx \sum\limits_{n\in \mathbb{N}}\int_{B_n \setminus B_{n-1}} f(x) \, dx\\ &= \int_{\mR^2} f(x) \, dx.\qedhere
	\end{align*}
\end{proof}
The case of $\alpha \in (1,2)$ is included in the following result.
\begin{theorem}\label{th:strip}
	Let $\Omega = \mR \times (0,1)$. Assume that $K$ satisfies \AJ,\ \AD,\ \AT\ and $\sum_{n\geq 1} \int_{B(0,n)^c} K(0,x) \, dx < \infty$. Then the seminorms \eqref{eq:fullsemi} and \eqref{eq:truncated} are comparable.
\end{theorem}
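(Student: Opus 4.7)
The plan is to exploit the product structure of $\Omega = \mR\times(0,1)$ by decomposing it along its unbounded direction into unit slices $Q_m = [m,m+1)\times(0,1)$, $m\in\mathbb{Z}$, together with the overlapping triple slices $\widetilde Q_m = [m-1,m+2)\times(0,1)$. Each $\widetilde Q_m$ is a bounded Lipschitz (hence uniform) domain, and since all $\widetilde Q_m$ are congruent, Theorem \ref{th:main} applies to them with a single comparability constant; the assumptions \AJ, \AD, \AT\ pass down automatically since the relevant scaling ranges for $\widetilde Q_m$ are shorter than for $\Omega$. The full seminorm is split as
\begin{equation*}
\sum_{m,n\in\mathbb{Z}} \int_{Q_m}\int_{Q_n} (f(x) - f(y))^2 K(x,y)\,dy\,dx,
\end{equation*}
and I handle the near regime $|m-n|\leq 1$ and the far regime $|m-n|\geq 2$ separately.

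The near regime is straightforward: $Q_m\cup Q_n\subseteq \widetilde Q_m$, so Theorem \ref{th:main} applied on $\widetilde Q_m$ bounds the corresponding double integral by the truncated seminorm on $\widetilde Q_m$. For $x\in Q_m$ the identity $\delta_{\widetilde Q_m}(x)=\delta_\Omega(x)$ holds (since $x$ sits at distance at least $1$ from the artificial vertical faces of $\widetilde Q_m$ while $\delta_\Omega(x)\leq 1/2$), so the resulting integrand is dominated by the truncated seminorm integrand for $\Omega$; summing over $m$ costs only the bounded multiplicity of the cover.

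For the far regime, introduce the averages $f_m = |Q_m|^{-1}\int_{Q_m} f$ and split
\begin{equation*}
(f(x)-f(y))^2 \leq 3(f(x)-f_m)^2 + 3(f_m-f_n)^2 + 3(f(y)-f_n)^2.
\end{equation*}
The outer terms, summed over $n$ with $|n-m|\geq 2$, are bounded by $(f(x)-f_m)^2\int_{|z|\geq 1} K(0,z)\,dz$, which is finite by the hypothesis applied at $n=1$; then the $L^2$ oscillation $\int_{Q_m}(f(x)-f_m)^2\,dx$ is controlled by Jensen's inequality together with the lower bound $K(x,y)\gtrsim 1$ on $Q_m\times Q_m$ (since $|x-y|\leq\sqrt 2$ and $\phi(\sqrt 2)$ is finite by \AT), and finally absorbed into the truncated seminorm via Theorem \ref{th:main} on $\widetilde Q_m$, exactly as in the near regime. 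The crux is the middle term: denoting by $k_j$ the radial profile of $K$ at distance $j$, one checks that $\int_{Q_m}\int_{Q_n} K \lesssim k_{|m-n|}$ for $|m-n|\geq 2$; telescoping $f_m - f_n = \sum_{k=m}^{n-1}(f_k-f_{k+1})$ and Cauchy--Schwarz yield $(f_m-f_n)^2\leq |n-m|\sum_{k=m}^{n-1}(f_k-f_{k+1})^2$, and switching the order of summation produces another factor $|n-m|$ from counting admissible pairs $(m,n)$ with $m\leq k<n$ and $n-m=j$, giving
\begin{equation*}
\sum_{|m-n|\geq 2}(f_m-f_n)^2\, k_{|m-n|} \lesssim \Big(\sum_{j\geq 2} j^2\, k_j\Big)\sum_{k\in\mathbb{Z}}(f_k-f_{k+1})^2.
\end{equation*}
Fubini combined with the fact that the annulus $\{j\leq|y|<j+1\}\subseteq\mR^2$ has area $\approx j$ shows that $\sum_j j^2 k_j$ is comparable to $\sum_{n\geq 1}\int_{B(0,n)^c} K(0,x)\,dx$, which is finite by hypothesis; this is the only place where the extra tail assumption enters. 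The residual sum $\sum_k(f_k-f_{k+1})^2$ is then handled exactly like $\int_{Q_m}(f(x)-f_m)^2\,dx$, namely by Jensen, the lower bound on $K$ on $Q_k\cup Q_{k+1}$, and Theorem \ref{th:main} applied on the slice $[k-1,k+3)\times(0,1)$. The main obstacle is precisely the telescoping estimate: the extra factor $|n-m|$ introduced there must be absorbed by the tail of $K$, and the summability hypothesis is tailored to balance exactly these two effects, which is why the result is essentially sharp in terms of the decay of $K$ at infinity.
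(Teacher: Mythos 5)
Your proposal is correct and follows essentially the same approach as the paper's proof: both decompose $\Omega$ into unit slices and overlapping triple slices, apply Theorem \ref{th:main} on the triple slices (uniform, congruent) for the near regime, and in the far regime insert cube averages, telescope through the chain of adjacent cubes, apply Cauchy--Schwarz to pick up a factor $|m-n|$, and then swap summation order so that the extra tail hypothesis $\sum_{n\geq 1}\int_{B(0,n)^c} K(0,x)\,dx<\infty$ (which in $d=2$ is $\approx\sum_j j^2 k_j$, or equivalently $\int_\Omega(1\vee|x|)^2 K(0,x)\,dx$ via the paper's Lemma \ref{lem:tech}) absorbs the resulting $|n-m|^2 k_{|n-m|}$ weight.
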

\begin{proof}
	We split the domain $\Omega$ into open unit cubes $Q_n$ centered in $(n,1/2)$, $n\in \mathbb{Z}$, so that we have $\Omega \subseteq \bigcup\limits_{n\in\mathbb{Z}} \overline{Q_n}$. If we let $L_n = \mathrm{Int}[\overline{Q_{n-1}\cup Q_n\cup Q_{n+1}}]$, then $L_n$ is a uniform domain, hence by Theorem \ref{th:main}
	\begin{align*}
	\int_{L_n}\int_{L_n} (f(x) - f(y))^2 K(x,y) \red{\, dy \, dx}\approx \int_{L_n}\int_{B(x,\theta\delta(x))} (f(x) - f(y))^2 K(x,y) \red{\, dy \, dx}
	\end{align*}
	with the constant independent of $n$.
	Therefore for every $0<\theta\leq 1$
	\begin{align}
	&\int_{\Omega}\int_{B(x,\theta\delta(x))} (f(x) - f(y))^2 K(x,y) \red{\, dy \, dx}\nonumber\\ \approx  &\sum_{n\in\mathbb{Z}}\int_{L_n}\int_{L_n} (f(x) - f(y))^2 K(x,y)\, \red{dy \, dx}\nonumber\\
	\approx &\sum_{n\in\mathbb{Z}}\int_{Q_n}\int_{L_n} (f(x) - f(y))^2 K(x,y)\, \red{dy \, dx},\label{eq:kl}
	\end{align}
	so it suffices to show that the latter expression is comparable with the integral over $\Omega\times \Omega$.
	We have
	\begin{align*}
	&\int_{\Omega}\int_{\Omega} (f(x) - f(y))^2 K(x,y) \red{\, dy \, dx}\\ = &\sum_{i,j\in\mathbb{Z}} \int_{Q_i}\int_{Q_j} (f(x) - f(y))^2 K(x,y) \red{\, dy \, dx}\\
	\approx &\sum_{i \in \mathbb{Z}}\sum_{j + 1 < i} \int_{Q_i}\int_{Q_j}(f(x) - f(y))^2 K(x,y)\,\red{dy \, dx}\\ &+ \sum_{i \in \mathbb{Z}} \int_{Q_i}\int_{L_i}(f(x) - f(y))^2 K(x,y)\, \red{dy \, dx}.
	\end{align*}
	Clearly it suffices to estimate the first summand. Since the cubes are far apart, we have $|x-y| \approx |i-j|$ for $x\in Q_{i}$, $y\in Q_{j}$. Hence
	\begin{align}
	&\sum_{i \in \mathbb{Z}}\sum_{j + 1 < i} \int_{Q_i}\int_{Q_j}(f(x) - f(y))^2 K(x,y)\, \red{dy \, dx}\nonumber\\
	\lesssim\, &\red{\sum_{i \in \mathbb{Z}}\sum_{j + 1 < i} \int_{Q_{i}}\int_{Q_{j}} (f(x) - f_{Q_i})^2 K(x,y) \, dy \, dx}\label{eq:alfa}\\
	+&\red{\sum_{i \in \mathbb{Z}}\sum_{j + 1 < i} \int_{Q_{i}}\int_{Q_{j}} (f(y) - f_{Q_j})^2 K(x,y) \, dy \, dx}	\nonumber\\
	+&\sum_{i \in \mathbb{Z}}\sum_{j + 1 < i} \sum_{j\leq n < i}\int_{Q_i}\int_{Q_j} (f_{Q_{n+1}} - f_{Q_n})^2 |x-y|K(x,y) \, \red{dy \, dx}.\nonumber
	\end{align} 
	In this inequality we have used $(a_1 + \ldots + a_m)^2 \leq m(a_1^2 + \ldots +a_m^2)$ and $|Q_i| = |Q_j| = 1$.
	For the first term we use Jensen's inequality and the fact that the sum over $j$ is \red{uniformly bounded with respect to $i$ and $x\in Q_i$:}
	\begin{align*}
	&\sum_{i \in \mathbb{Z}} \int_{Q_i} (f(\red{x}) - f_{Q_i})^2\sum_{j + 1 < i}\int_{Q_j} K(x,y) \red{\, dy \, dx}\\ \lesssim &\sum_{i\in\mathbb{Z}}\int_{Q_i}\int_{Q_i} (f(y) - f(x))^2 \, \red{dy \, dx}.
	\end{align*}
	The latter expression does not exceed \eqref{eq:kl}. \red{The second term can be estimated in a similar way after changing the order of summation}.
	
	By Lemma \ref{lem:tech} the additional assumption on $K$ is equivalent to $$\sum_{n\geq 1}\int_{B(0,n)^c\cap \Omega} |x| K(0,x) \, dx < \infty.$$ We change the order of summation and use that fact to estimate the last term on the right hand side of \eqref{eq:alfa}:
	\begin{align*}
	&\sum_{i \in \mathbb{Z}}\sum_{j + 1 < i} \sum_{j\leq n < i} (f_{Q_{n+1}} - f_{Q_n})^2\int_{Q_i}\int_{Q_j} |x-y|K(x,y) \red{\, dy \, dx}\\
	= &\sum_{n\in\mathbb{Z}}(f_{Q_{n+1}} - f_{Q_n})^2\sum\limits_{i> n}\sum_{\substack{j+1<i\\j\leq n}}\int_{Q_i}\int_{Q_j} |x-y|K(x,y) \red{\, dy \, dx}\\
	\lesssim &\sum_{n\in\mathbb{Z}}(f_{Q_{n+1}} - f_{Q_n})^2 \leq \sum_{n\in\mathbb{Z}}\int_{Q_n}\int_{Q_{n+1}}(f(x) - f(y))^2 \red{\, dy \, dx}\\
	\lesssim &\sum_{n\in\mathbb{Z}}\int_{Q_n}\int_{Q_{n+1}}(f(x) - f(y))^2 \red{K(x,y)}\red{\, dy \, dx}.
	\end{align*}
\end{proof}

\begin{proof}[Proof of Theorem \ref{th:stripmd}]
	The idea is similar as above. We split $\Omega$ into a family of unit cubes $(Q_i)_{i\in \mathbb{Z}^k}$ and we let $L_i = \mathrm{Int}\big[\bigcup \{\overline{Q_j}\colon B(x_{Q_i},\sqrt{d}) \cap Q_j \ne \emptyset\}\big]$. By Theorem \ref{th:main}, for $0<\theta\leq 1$ we have
	\begin{align*}
	&\int_\Omega\int_{B(x,\theta\delta(x))} (f(x) - f(y))^2 |x-y|^{-d-\alpha} \, dy \, dx\\ \approx &\sum_{i\in \mathbb{Z}^k}\int_{Q_i}\int_{L_i} (f(x) - f(y))^2 |x-y|^{-d-\alpha} \, dy \, dx.
	\end{align*}
	For $i=(i_1,\ldots,i_k),\ j = (j_1,\ldots, j_k)$ and $m\in \mathbb{N}_0$, we say that $j>i+m$ if $j_1 > i_1+m,\ldots,\, j_k>i_k + m$. \red{By $j>m$ we mean $j>0+m$ and $j\geq i + m$ is defined by replacing \textit{all} the inequalities by weak ones.} By the radial symmetry of $|x-y|^{-d-\alpha}$ it suffices to show that under our assumptions on $l$ and $\alpha$ we have
	\begin{align*}
	&\sum_{i\in \mathbb{Z}^k}\int_{Q_i}\int_{L_i} (f(x) - f(y))^2 |x-y|^{-d-\alpha} \, dy \, dx\\ \gtrsim &\sum_{i\in \mathbb{Z}^k}\int_{Q_i}\sum_{j>i+1}\int_{Q_j} (f(x) - f(y))^2 |x-y|^{-d-\alpha} \, dy \, dx.
	\end{align*}
	In order to perform a decomposition similar to \eqref{eq:alfa} we fix a method of communication from $Q_i$ to $Q_j$, $j>i$: first we move \red{on the} coordinate $i_1$ until we reach $j_1$, and then we do the same with the next coordinates. The set of indexes of the cubes connecting $Q_i$ and $Q_j$ \red{in the way presented above, with $Q_i$ included and $Q_j$ excluded,} will be called $i\to j$. Note that $|i\to j| \approx |i-j|$. Let $\mathcal{N}(Q)$ be the successor of $Q$ on the way from $Q_i$ to $Q_j$. As before, we have $|i-j| \approx |x-y|$ for $x\in \red{Q_i}$, $y\in \red{Q_j}$, therefore
	\begin{align*}
	&\sum_{i\in \mathbb{Z}^k}\int_{Q_i}\sum_{j>i+1}\int_{Q_j} (f(x) - f(y))^2 |x-y|^{-d-\alpha} \, dy \, dx\\ \lesssim &\sum_{i\in \mathbb{Z}^k}\int_{Q_i}\sum_{j>i+1}\int_{Q_j} (f(x) - f_{Q_i})^2 |x-y|^{-d-\alpha} \, dy \, dx\\
	+ &\sum_{i\in \mathbb{Z}^k}\int_{Q_i}\sum_{j>i+1}\int_{Q_j} (f(y) - f_{Q_j})^2 |x-y|^{-d-\alpha} \, dy \, dx\\
	+ &\sum_{i\in \mathbb{Z}^k}\int_{Q_i}\sum_{j>i+1}\int_{Q_j}\sum_{n\in i\to j} (f_{Q_n} - f_{\mathcal{N}(Q_n)})^2 |x-y|^{-d-\alpha+1} \, dy \, dx.
	\end{align*}
	The first \red{two terms} can be handled as in the previous theorem. In the \red{latter} we change the order of summation and we get \red{that up to a constant it does not exceed}
	\begin{align*}
	\sum_{n\in\mathbb{Z}^k} \red{\bigg(\int_{L_n} |f_{Q_n} - f(\xi)|\, d\xi\bigg)^2} \sum_{j \geq n}\sum_{\substack{i\leq n\\i+1<j}}\int_{Q_{\red{i}}}\int_{Q_{\red{j}}} |x-y|^{-d-\alpha+1} \red{\, dy \, dx}.
	\end{align*}
	To finish the proof we note that the double sum over $i,j$ does not depend on $n$, hence we take $n=\red{(1,\ldots,1)}$ (for short, $n=1$) and \red{we estimate as follows:}
	\begin{align*}
	&\sum_{j\geq \red{1}}\sum_{\substack{i\leq \red{1}\\i+1<j}}\int_{Q_{\red{i}}}\int_{Q_{\red{j}}} |x-y|^{-d-\alpha+1} \, \red{dy \, dx}\\ \approx &\sum_{j\geq \red{1}} \int_{Q_{j}} \int_{B(y,|j|)^c\cap\Omega} |x-y|^{-d-\alpha+1} \, dx \, dy\\
	= &\sum_{j\geq \red{1}} \int_{Q_j} \sum_{m=0}^\infty \int_{(B(0,2^{m+1}|j|)\setminus B(0,2^m|j|))\cap\Omega} |x|^{-d-\alpha+1} \, dx \, dy\\ \approx &\sum_{j\geq \red{1}} \int_{Q_{j}} \sum_{m=0}^\infty (2^m|j|)^k(2^m|j|)^{-d-\alpha+1} \, dy\\
	\approx & \sum_{j\geq \red{1}} |j|^{\red{k-d-\alpha + 1}} = \sum_{j\geq \red{1}} |j|^{\red{-l-\alpha+1}} \approx \red{\sum_{j\in \mathbb{Z}^k\setminus \{0\}} |j|^{-l-\alpha+1},}
	\end{align*}
	which is finite provided that \red{$k-l - \alpha < -1$}.
\end{proof}
\section{Application: a new class of Markov processes}\label{sec:proces}
In this section we present how our comparability results can be applied to prove the existence of Markov stochastic processes corresponding to the truncated seminorms \eqref{eq:truncated}. Hereafter we work with Sobolev spaces, i.e. $p=q=2$.

\red{In this Section we will discuss several cases which depend on various results concerning Sobolev spaces and censored/reflected Markov processes, each with its own assumptions. Therefore we refrain from formulating any theorems here, as they would be unnecessarily complicated. Interested readers may gather the assumptions from the references provided to each case.}

\red{We will gradually introduce some notions concerning the Dirichlet forms in \textit{italics}, for details we refer to Fukushima, Oshima, and Takeda \cite[Chapter 1.1]{MR2778606}. Let $\mathcal E$ be a symmetric bilinear form with domain $D[\mathcal E] \subseteq L^2(\Omega)$ for some $\Omega\subseteq \mR^d$. Let $\mathcal E_1(u,u) = \mathcal E(u,u) + \|u\|_{L^2(\Omega)}^2$. We say that $(\mathcal{E},D[\mathcal E])$ (this pair will also be called form below) is \textit{closed} if, with respect to $\mathcal E_1$, every Cauchy sequence has a limit in $D[\mathcal E]$. We say that the form is \textit{closable} if it has a closed extension. In what follows we write
	$$\mathcal{E}^{\rm cen}(u,u) = \int_\Omega\int_\Omega (u(x) - u(y))^2 K(x,y) \, \red{dy \, dx},$$
	and for $\theta\in (0,1]$
	$$\mathcal{E}^{\rm tr} (u,u) = \int_\Omega \int_{B(x,\theta\delta(x))} (u(x) - u(y))^2 K(x,y) \, dy \, dx.$$
	The symbol $\mathcal E^{\rm cen}$ refers to the censored stable processes introduced by Bogdan, Burdzy, and Chen \cite{MR2006232}. There, the kernel was the one known from the fractional Sobolev spaces: $K(x,y) = c|x-y|^{-d-\alpha}$. Censored processes for more general $K$ corresponding to a class of subordinated Brownian motions were studied by Wagner \cite{Vanja}.}

\red{We will consider the above forms in two contexts. In the first one, we start with the space of smooth functions, compactly supported in $\Omega$: $C_c^\infty(\Omega)$. Using the arguments which follow equation (2.4) on \cite[page 93]{MR2006232} it can be shown that $(\mathcal E^{\rm cen},C_c^\infty(\Omega))$ is closable and \textit{Markovian} for arbitrary L\'evy kernel $K$ (in particular, any which satisfies \AJ) and set $\Omega$. If we let
	$$\mathcal{F} := \textrm{'Completion of }C^\infty_c(\Omega)\textrm{ with respect to }\mathcal{E}^{\rm cen}_1\,',$$
	then by \cite[Theorem 3.1.1]{MR2778606} $(\mathcal E^{\rm cen},\mathcal F)$ is closed and Markovian, that is, a \textit{Dirichlet} form. Furthermore, by construction, it is obvious that $C_c^\infty(D)$ is a \textit{core} for $(\mathcal E^{\rm cen},\mathcal F)$, hence the form is \textit{regular} and by \cite[Theorem 7.2.1]{MR2778606} to every regular Dirichlet form corresponds a Hunt process. Thus, when $\mathcal E^{\rm cen}$ is comparable to $\mathcal E^{\rm tr}$ we obtain the existence of a Hunt process with the Dirichlet form $(\mathcal E^{\rm tr},\mathcal F)$. We note that the arguments from \cite[page 93]{MR2006232} may be used directly with $\mathcal E^{\rm tr}$ because it has a similar structure. Then, independently of comparability results, we obtain a regular Dirichlet form $(\mathcal E^{\rm tr}, \mathcal F^{\rm tr})$, where
	$$\mathcal F^{\rm tr}  := \textrm{'Completion of }C^\infty_c(\Omega)\textrm{ with respect to }\mathcal{E}^{\rm tr}_1\,'.$$}
\red{\indent The second approach is by considering the domain corresponding to the active reflected form $\mathcal F^{\rm ref} := F_{2,2}(\Omega)$. Here the situation becomes more tedious, since in general $C_c^\infty(\Omega)$ (or even $C_c(\Omega)$) need not be dense in $\mathcal F^{\rm ref}$. However for some $K$ and $\Omega$ the density holds true, see e.g., \cite[Corollary~2.6]{MR2006232} and \cite[Corollary~2.9]{Vanja}. In that case we get that $\mathcal F = F_{2,2}(\Omega)$ and when the comparability holds, we in fact have $\mathcal F^{\rm tr} = F_{2,2} (\Omega)$. Then, the form $(\mathcal E^{\rm tr}, F_{2,2}(\Omega))$ is a regular Dirichlet form and there exists an associated Hunt process. If the density does not hold, the technical remedy is to change the reference set to $\overline \Omega$, cf. \cite[Remark 2.1]{MR2006232}. If $K$ and $\Omega$ are sufficiently regular, then there exist extension (and trace) operators between $F_{2,2}(\Omega)$ and $F_{2,2}(\mR^d)$, see e.g. Jonsson and Wallin \cite[Chapter V]{MR820626} or Rutkowski \cite[Section 6]{2018AR}. Thanks to them we may show that $C_c^\infty(\overline \Omega)$ is dense with respect to $\mathcal E^{\rm cen}_1$ in $F_{2,2}(\Omega)$ by using the results for the functions on the whole space $\mR^d$, available for very general L\'evy kernels, see, e.g., Bogdan, Grzywny, Pietruska-Pa\l{}uba, and Rutkowski \cite[Lemma A.5]{MR4088505} or Fiscella, Servadei, and Valdinoci \cite{MR3310082}. Then we obtain the existence of a process on $\overline \Omega$ corresponding to the regular Dirichlet form $(\mathcal E^{\rm cen}, F_{2,2}(\Omega))$ and the comparability yields the existence of the process corresponding to $(\mathcal E^{\rm tr}, F_{2,2}(\Omega))$.}

\red{The latter case seems more interesting in terms of applying the comparability results as we build regular Dirichlet forms from the truncated form $\mathcal E^{\rm tr}$ on the well-established Sobolev/Triebel--Lizorkin space $F_{2,2}(\Omega)$, which is then its natural domain.}
\bibliographystyle{abbrv}
\bibliography{TL}

\end{document}